\def\y{\mathbf{y}}
\def\S{\mathcal{S}}
\newtheorem{theorem}{Theorem}[section]
\newtheorem{lemma}[theorem]{Lemma}
\newtheorem{assumption}[theorem]{Assumption}
\newtheorem{definition}[theorem]{Definition}
\newtheorem{proposition}[theorem]{Proposition}
\newtheorem{remark}[theorem]{Remark}
\newcommand{\mathup}[1]{\text{\textup{#1}}}
\begin{document}

\title{Gradient-based algorithms for multi-objective bi-level optimization\footnote{Citation: Xinmin Yang, Wei Yao, Haian Yin, Shangzhi Zeng, Jin Zhang. Gradient-based algorithms for multi-objective bi-level optimization. SCIENCE CHINA Mathematics, 2024, 67: 1419--1438. Doi: 10.1007/s11425-023-2302-9.}}

\date{}

\author{
	Xinmin Yang$^{1,2}$\quad Wei Yao$^{3,4}$\quad Haian Yin$^{4}$\quad Shangzhi Zeng$^{5}$\quad Jin Zhang$^{4,3,\ast}$\\
	\small $^{1}$National Center for Applied Mathematics in Chongqing, Chongqing {\rm401331}, China\\
	\small $^{2}$School of Mathematical Sciences, Chongqing Normal University, Chongqing {\rm401331}, China\\
	\small $^{3}$National Center for Applied Mathematics Shenzhen, Shenzhen {\rm518000}, China\\
	\small $^{4}$Department of Mathematics, Southern University of Science and Technology, Shenzhen {\rm518055}, China\\
	\small $^{5}$Department of Mathematics and Statistics, University of Victoria, Victoria, British Columbia {\rm V8W 2Y2}, Canada\\
	\small Email: xmyang@cqnu.edu.cn, yaow@sustech.edu.cn, 11930905@mail.sustech.edu.cn,\\ 
	\small zengshangzhi@uvic.ca, zhangj9@sustech.edu.cn
}

\maketitle

\noindent\textbf{Abstract}\quad Multi-Objective Bi-Level Optimization (MOBLO) addresses nested multi-objective optimization problems common in a range of applications. However, its multi-objective and hierarchical bilevel nature makes it notably complex.  Gradient-based MOBLO algorithms have recently grown in popularity, as they effectively solve crucial machine learning problems like meta-learning, neural architecture search, and reinforcement learning.  Unfortunately, these algorithms depend on solving a sequence of approximation subproblems with high accuracy, resulting in adverse time and memory complexity that lowers their numerical efficiency.  To address this issue, we propose a gradient-based algorithm for MOBLO, called gMOBA, which has fewer hyperparameters to tune, making it both simple and efficient.  Additionally, we demonstrate the theoretical validity by accomplishing the desirable Pareto stationarity.  Numerical experiments confirm the practical efficiency of the proposed method and verify the theoretical results.  To accelerate the convergence of gMOBA, we introduce a beneficial L2O neural network (called L2O-gMOBA) implemented as the initialization phase of our gMOBA algorithm. Comparative results of numerical experiments are presented to illustrate the performance of L2O-gMOBA.

\hspace{3pt}

\noindent \textbf{Keywords}\quad multi-objective, bi-level optimization, convergence analysis, Pareto stationary, learning to optimize

\section{Introduction}
\label{intro}

In this paper, we are interested in the Multi-Objective Bi-Level Optimization (MOBLO) problem stated as follows:
\begin{equation}\label{MOBLO}
	\begin{aligned}
		\mathop{\min}\limits_{x \in \mathcal{X},\, y \in \mathbb{R}^{n_2}} 
		& (F_1(x,y) + g_1(x), \ldots, F_m(x,y) + g_m(x)), \\
		\mathrm{s.t.} \quad & 
		y \in \S(x):=\underset{y' \in \mathbb{R}^{n_2}}{\mathrm{argmin}} \, f(x,y'),
	\end{aligned}
\end{equation}
where $F_1(x,y), \dots, F_m(x,y)$ are smooth but possibly nonconvex functions, $g_1(x),\dots,g_m(x)$ are convex but possibly nonsmooth functions, the constraint set $\mathcal{X} \subset \mathbb{R}^{n_1}$ is closed and convex, and $f(x,y)$ is a smooth scalar function and strongly convex with respect to $y$. The vector-valued function $(F_1(x,y) + g_1(x), \ldots, F_m(x,y) + g_m(x))$ given above is called the upper-level multi-objective, while the scalar function $f(x,y)$ is called the lower-level objective. The variables $x\in\mathbb{R}^{n_1}$, $y \in \mathbb{R}^{n_2}$ are called the upper-level (UL) and lower-level (LL) variables, respectively.
MOBLO tackles nested multi-objective optimization structures appearing in applications, where the multiple-objective UL problem must be solved while ensuring the optimality of the LL problem.
The multi-objective and nested nature make MOBLO notoriously challenging, even in the special case where the LL problem is unconstrained and the LL objective is strongly convex with respect to (w.r.t.) the LL variable. 

MOBLO has gained increased attention in recent years as it is highly relevant in practice. 
Most real-world problems require trading off multiple competing objectives, especially in domains such as multi-objective meta-learning \cite{goldblum2020adversarially, ye2021multi}, multi-objective neural architecture search (NAS) \cite{dong2018dpp, tan2019mnasnet, elsken2018efficient, lu2020nsganetv2}, and multi-objective reinforcement learning \cite{vamplew2011empirical, mossalam2016multi, yang2019generalized, abdolmaleki2020distributional}. In these domains, a learner is required to discover a model that performs well across different objectives, such as prediction quality, efficiency, fairness, or robustness. MOBLO allows us to optimize trade-offs between different performance measures and yield better solutions than the single-objective bi-level optimization (BLO) problems by taking a multi-objective perspective. Recent multi-objective approaches have proposed fixing Generative Adversarial Networks (GANs) instability issues with multiple discriminators \cite{neyshabur2017stabilizing, albuquerque2019multi}.

MOBLO's challenges stem from the need to determine a solution that balances different performance metrics while solving optimization problems at diverse levels with distinct objectives. For example, conventional meta-learning methods and applications assume that the contribution of each task or instance to the meta-learner is equal. Therefore, the meta-training step can be formulated as a BLO problem \cite{TimothyMHospedales2020MetaLearningIN, liu2021investigating}. However, when the test tasks come from a different distribution than the training tasks distribution, the existing meta-learning techniques often fail to generalize well \cite{chen2018closer}. Model-Agnostic Meta-Learning (MAML) \cite{finn2017model} also often fails to address domain shift between base and novel classes in few-shot learning since it assumes equal weights to all samples and tasks during meta-training \cite{killamsetty2022nested}. 

To avoid a priori trade-offs, meta-learning with multiple objectives can be formulated as a MOBLO problem, as tackled in some prior works. However, existing works utilize multi-objective bi-level evolutionary algorithms, which may fail in solving large-scale MOBLO problems due to the lack of gradient information and higher computational complexity. To address this issue, \cite{ye2021multi} proposed a gradient-based MOBLO algorithm with convergence guarantee by solving the lower-level and upper-level subproblems alternatively via the gradient descent method and the gradient-based multi-objective optimization method, respectively. 
However, this algorithm requires the LL problem trajectory length to go to infinity and find out the Pareto optimal solution of each approximation multi-object problem, which is difficult to implement in practice.
Hence, a practical issue lingers:

{\it Can we design a new gradient-based MOBLO algorithm that is easy to implement and provably converges?}

\subsection{Main Contributions}

In response to the above issues, 
this paper introduces a simple yet highly efficient gradient-based algorithm for MOBLO, namely gMOBA, with a convergence guarantee. 
Numerical experiments confirm the practical efficiency of the proposed method and verify the theoretical convergence results.
To further accelerate the convergence of gMOBA, motivated by the notable performance of model-based L2O methods, 
we propose a useful L2O neural network by unrolling and truncating our gMOBA algorithm. 
The main contributions of this paper are as follows:
\begin{itemize}
	\item We propose a new gradient-based algorithm for MOBLO, namely gMOBA. It is single-loop and Hessian inverse-free with a convergence guarantee. Unlike the existing MOBLO methods that sequentially update the upper-level variable after fully updating the lower-level variable, our algorithm avoids solving a series of time-consuming subproblems, making it more powerful.
	
	\item Using a useful nonsmooth Lyapunov function approach, we justify the convergence towards the desirable Pareto stationarity of the proposed algorithm. These results are new, even in the context of single-objective bi-level optimization.
	
	\item To accelerate the convergence of gMOBA, motivated by the recent promising performance of L2O, we introduce a beneficial L2O neural network (called L2O-gMOBA) implemented as the initialization phase of our gMOBA algorithm. Comparative results of numerical experiments are presented to illustrate the efficiency of L2O-gMOBA.
	
\end{itemize}

\subsection{Related Work}

{\bf Multi-Objective Optimization (MOO).} 
MOO, also known as Pareto Optimization, aims to optimize a set of potentially conflicting objectives simultaneously. 
It has grown increasingly popular due to its broad applications in machine learning, particularly in multi-task learning \cite{ruder2017overview, sener2018multi, lin2019pareto, yu2020gradient, mahapatra2020multi, liu2021conflict, momma2022multi}, multi-objective reinforcement learning (RL) \cite{van2014multi, chen2019meta, abdolmaleki2020distributional}, federated learning \cite{mohri2019agnostic, hu2022federated}, and so on. 
A standard approach for MOO is the weighted (scalarized) method, which minimizes sums of the different objectives for various weight combinations. This technique is straightforward and commonly used in machine learning, mainly because conventional learning algorithms are only capable of handling scalar cost functions. However, the weighted approach could be extremely inefficient \cite{jin2008pareto}. In recent years, the multiple gradient descent algorithm (MGDA) \cite{fliege2000steepest, desideri2012multiple} has gained popularity in machine learning. It generates a gradient vector for discovering Pareto solutions. For instance, MGDA has been applied to multi-task learning in \cite{sener2018multi} and to federated learning in \cite{hu2022federated}. Latterly, for nonsmooth MOO problems, MGDA has been improved by a proximal point method in \cite{bonnel2005proximal}, a subgradient method in \cite{da2013subgradient},  proximal gradient methods in \cite{tanabe2019proximal}, 
and a Barzilai-Borwein descent method in \cite{chen2023barzilai} recently. 
MOBLO, on the other hand, addresses nested multi-objective optimization structures present in real-world applications. Its hierarchical bilevel structure introduces additional difficulty, as computing the multi-hypergradient, i.e., the vector of multiple gradients of UL total objectives, is prohibitively expensive, requiring solving a series of approximation subproblems with high accuracy \cite{ye2021multi}.

{\bf Bi-level Optimization (BLO).} 
BLO addresses nested optimization structures present in real-world applications. 
In the last decade, it has received increasing attention, particularly in deep learning, such as hyperparameter optimization (\cite{pedregosa2016hyperparameter, franceschi2017forward, mackay2018self}), meta-learning (\cite{franceschi2018bilevel, zugner2018adversarial, rajeswaran2019meta, KaiyiJi2020ConvergenceOM}), and neural architecture search (\cite{liu2018darts, liang2019darts+, chen2019progressive}), among other areas. 
A variety of gradient-based BLO algorithms have gained popularity because of their effectiveness and simplicity. Most of them rely on various hypergradient approximations, i.e., approximation of the gradient of the UL objective. For example, an approximate hypergradient can be straightforwardly calculated by automatic differentiation based on the optimization trajectory of the LL variable in the iterative differentiation (ITD) based approach \cite{maclaurin2015gradient, franceschi2017forward, shaban2019truncated, grazzi2020iteration, liu2020generic, ji2021bilevel}. Another approach, namely the approximate implicit differentiation (AID) based method \cite{pedregosa2016hyperparameter, ghadimi2018approximation, rajeswaran2019meta, ji2022will}, exploit implicit differentiation to derive an analytical expression of the hypergradient and then estimates the Hessian-inverse-vector product by solving a linear system accurately. 
The novel BLO frameworks presented in \cite{dagreou2022framework} and \cite{liu2023bilevel} are particularly relevant to our work. They allow for the simultaneous evolution of UL, LL variables, and the solution of the linear system, enabling stochastic and global variance reduction algorithms. However, they focus on smooth BLO, while we generalize this to MOBLO with nonsmooth multiple UL objectives.

{\bf Learning to Optimize (L2O).}
L2O is an emerging approach that leverages machine learning to develop an optimization method by training, i.e., learning from its performance on sample problems \cite{chen2022learning}. There are two mainstream L2O approaches: model-free and model-based. A model-free L2O approach is generally to  learn a parameterized update rule of optimization without taking the form of any analytic update \cite{andrychowicz2016learning}. 
In contrast, model-based L2O methods model their iterative update rules through a learnable architecture inspired by analytic optimization algorithms.
Most model-based L2O methods take one of the two following mainstream approaches. The first approach is known as plug and play (PnP), whose key idea is to plug a pre-trained neural network into part of the update for an optimization algorithm, and then play by immediately applying the modified algorithm to problems sampled from the same task distribution \cite{venkatakrishnan2013plug}. The second approach is known as algorithm unrolling, which unrolls and truncates optimization algorithm into the structure of a neural network \cite{monga2021algorithm}. 
The parameters in unrolled schemes are trained end-to-end using the final iterate as a function of each learnable weights whereas training occurs separately for PnP. 
The typical algorithm unrolling methods
include variations of the iterative shrinkage thresholding algorithms (ISTA) and the alternating direction method of multipliers (ADMM), see, e.g., \cite{gregor2010learning,sprechmann2013supervised,sun2016deep,liu2019alista}. 

\section{Preliminaries}
Throughout this paper, we define the relation $\le (<)$ in $\mathbb{R}^m$ as $a \le b \ (a<b)$
if and only if $a_i \le b_i \ (a_i < b_i) $ for all $i = 1,\ldots, m$.  
The following standing assumptions on the UL multi-objective and the LL objective are adopted throughout this paper. 
\begin{assumption}\label{Assump}  \phantom{1}
	\begin{enumerate}[(a)]
		\item For any $x \in \mathcal{X}$, the LL objective $f(x, y)$ is $\mu$-strongly convex with respect to the LL variable $y$.
		
		\item The LL objective $f(x,y)$ is twice continuously differentiable and with Lipschitz continuous first and second order derivatives on $\mathcal{X} \times \mathbb{R}^{n_2}$.
		
		\item For $i = 1, \ldots, m$, the smooth part $F_i(x,y)$ of the UL objective is continuously differentiable and bounded below on $\mathcal{X} \times \mathbb{R}^{n_2}$, its first order derivative is Lipschitz continuous and bounded on $\mathcal{X} \times \mathbb{R}^{n_2}$.
		
		\item Each nonsmooth part $g_i(x)$ of the UL multi-objective is convex and continuous on an open set $\mathcal{O} \supseteq \mathcal{X}$.
	\end{enumerate}
\end{assumption}

The above assumptions are standard in the BLO literature, see, e.g.,  \cite{ghadimi2018approximation, ji2021bilevel, khanduri2021near}.
Assumption \ref{Assump}(a) leads to the uniqueness of $\S(x)$ for any $x \in \mathcal{X}$. And then Problem \eqref{MOBLO} has the following equivalent single-level multi-objective optimization reformulation,
\begin{equation}\label{multobj}
	\min_{x } \Phi(x) := (\varphi_1(x) + g_1(x), \ldots, \varphi_m(x)+ g_m(x)),
\end{equation}
where $y^*(x)= {\mathrm{argmin}}_{y}  f(x,y)$ and $\varphi_i(x) = F_i(x, y^*(x)) $.

We recall the definition of the Pareto optimal solution of multi-objective optimization as follows.
\begin{definition}
	A point $\bar{x} \in \mathbb{R}^{n_1}$ is called {\it Pareto optimal} for $\Phi$, if there is no $x \in \mathbb{R}^{n_1}$ such that $\Phi(x) \le \Phi(\bar{x})$ and $\Phi(x) \neq \Phi(\bar{x})$. Similarly, $\bar{x} \in \mathbb{R}^{n_1}$ is called {\it weakly Pareto optimal} for $\Phi$, if there is no $x \in \mathbb{R}^{n_1}$ such that  $\Phi(x) < \Phi(\bar{x})$. 
\end{definition}

It is known that Pareto optimal points are always weakly Pareto optimal, and the converse is not always true. 
\begin{definition}
	We say that $\bar{x} \in \mathbb{R}^{n_1}$ is {\it Pareto stationary }for $\Phi$ if
	\[
	\max_{i = 1,\ldots,m} \Phi_i'(x;d) \ge 0 \quad \forall\, d \in \mathbb{R}^{n_1},
	\]
	where $\Phi_i'(x;d)$ is the directional derivative of $\Phi_i$ at $x$ in the direction $d$.
\end{definition}
With this definition, as shown in Lemma 2.2 of \cite{tanabe2019proximal}, weakly Pareto optimal points are always Pareto stationary. 
\begin{lemma} \cite{tanabe2019proximal}
	If $\bar{x} \in \mathbb{R}^{n_1}$ is {\it weakly Pareto optimal} for $\Phi$, then $\bar{x} \in \mathbb{R}^{n_1}$ is {\it Pareto stationary }for $\Phi$.
\end{lemma}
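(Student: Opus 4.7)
The plan is to argue by contradiction. Suppose $\bar{x}$ is weakly Pareto optimal for $\Phi$ but not Pareto stationary. By negating the stationarity definition, there must exist a direction $d \in \mathbb{R}^{n_1}$ such that
\[
\max_{i = 1, \ldots, m} \Phi_i'(\bar{x}; d) < 0,
\]
so that $\Phi_i'(\bar{x}; d) < 0$ for every $i = 1, \ldots, m$ simultaneously. The goal is to convert these negative directional derivatives into a pointwise strict decrease of $\Phi$ along $d$, contradicting weak Pareto optimality.

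The main step is to invoke the definition of the (one-sided) directional derivative, $\Phi_i'(\bar{x}; d) = \lim_{t \downarrow 0} [\Phi_i(\bar{x} + td) - \Phi_i(\bar{x})]/t$. Each $\Phi_i = \varphi_i + g_i$ has a well-defined directional derivative at $\bar{x}$: by Assumption \ref{Assump}(a)--(c) and the implicit function theorem, $\varphi_i(x) = F_i(x, y^*(x))$ is continuously differentiable, and by Assumption \ref{Assump}(d) the convex function $g_i$ admits directional derivatives everywhere on its open domain $\mathcal{O}$. Thus for each $i$ there exists $t_i > 0$ such that
\[
\frac{\Phi_i(\bar{x} + td) - \Phi_i(\bar{x})}{t} < 0 \quad \text{for all } t \in (0, t_i),
\]
i.e., $\Phi_i(\bar{x} + td) < \Phi_i(\bar{x})$ on $(0, t_i)$.

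Setting $t^* := \min_{i = 1, \ldots, m} t_i$, which is strictly positive because $m$ is finite, we obtain for any $t \in (0, t^*)$ the componentwise strict inequality $\Phi(\bar{x} + td) < \Phi(\bar{x})$. Choosing $x := \bar{x} + td$ for any such $t$ produces a point witnessing $\Phi(x) < \Phi(\bar{x})$, which directly contradicts the weak Pareto optimality of $\bar{x}$. Hence $\bar{x}$ must be Pareto stationary.

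The only subtle point is ensuring the directional derivatives actually exist and behave as finite limits rather than as one-sided upper bounds; this is handled cleanly by the smooth-plus-convex decomposition $\Phi_i = \varphi_i + g_i$ together with the standing assumptions, so the classical limit identity for $\Phi_i'(\bar{x}; d)$ applies directly. Beyond this, the argument is elementary and standard for weakly-to-stationary implications in nonsmooth multi-objective optimization, and I do not anticipate any further obstacle.
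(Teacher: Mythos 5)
Your proof is correct. The paper gives no proof of this lemma---it is quoted verbatim from Lemma 2.2 of the cited reference---and your contradiction argument (negating stationarity to obtain a direction $d$ with $\Phi_i'(\bar{x};d)<0$ for every $i$, then using the limit definition of the directional derivative to produce, for all sufficiently small $t>0$, a point $\bar{x}+td$ with $\Phi(\bar{x}+td)<\Phi(\bar{x})$, contradicting weak Pareto optimality) is exactly the standard argument used there, with the existence and finiteness of the directional derivatives correctly justified by the smooth-plus-convex decomposition $\Phi_i=\varphi_i+g_i$.
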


But the converse is not always true. If every component $\Phi_i$ is convex, then Pareto stationarity implies weak Pareto optimality. Furthermore, if every component $\Phi_i$ is strictly convex, then Pareto stationary points are also Pareto optimal \cite{tanabe2019proximal}.

\section{Gradient-based Multi-Objective Bilevel Algorithm}

In this section, we will present and conduct a convergence analysis of our main algorithm, namely the gradient-based Multi-Objective Bilevel Algorithm (gMOBA), designed to solve MOBLO in \eqref{MOBLO}. 

\subsection{Algorithm Formulation}

As discussed in the previous section, under Assumption \ref{Assump}(a),
a unique solution can be derived for the LL problem, represented as $y^*(x)$, when $x$ is fixed.
Therefore, the LL solution can be incorporated into the smooth part of the UL multi-objective as $\varphi(x):=(F_1(x, y^*(x)),\dots, F_m(x, y^*(x)))$, and Problem \eqref{MOBLO} is equivalently reformulated into the following single-level multi-objective optimization problem on the variable $x$,
\begin{equation*}
	\min_{x \in \mathcal{X}} \Phi(x) := (\varphi_1(x) + g_1(x), \ldots, \varphi_m(x)+ g_m(x)).
\end{equation*}
In particular,
by utilizing the chain rule and implicit function theorem, we can show that for each $i=1,\dots,m$, $\varphi_i(x)$ is differentiable and 
\begin{equation}\label{hypergradient}
	\nabla\varphi_i(x)=\nabla_{x} F_i(x,y^*(x))-\nabla_{xy}^2 f(x,y^*(x)) v_i^*(x),
\end{equation}
where $v_i^*(x)$ is the solution of the following linear system
\begin{equation}\label{eqv}
	\nabla_{yy}^2 f (x,y^*(x)) v_i^*(x)=\nabla_{y} F_i (x,y^*(x)).
\end{equation}
However, the computation of $\nabla \varphi_i$ is challenging and expensive due to the nested nature of function $\varphi_i(x)$. 
With given $x$, in order to calculate $\nabla\varphi_i(x)$, we need to solve the LL problem $\min_y f(x,y)$ for $y^*(x)$ and the linear system \eqref{eqv} for $v^*_i(x)$, which is costly. To address this issue, we propose to use an easy-to-compute approximation of $\nabla \varphi_i$ in the developing of gMOBA.
Inspired by the single-loop gradient-based algorithm proposed in  \cite{dagreou2022framework} for solving single objective BLO problems, we consider the following computation process for generating approximation $d_{\varphi_i}^k$ of $\nabla\varphi_i(x^k)$ for $i = 1, \ldots, m$ at each iteration as
\begin{equation}
	d_{\varphi_i}^k  =\nabla_{x} F_i(x^k,y^k) - \nabla_{xy}^2 f(x^k,y^k)v_i^k, 
\end{equation}
with $(y^k, v^k_i)$ being approximation to $(y^*(x^k), v^*_i(x^k))$ updated at each iteration as
\begin{align}
	y^{k+1} &= y^k - \beta \nabla_{y} f(x^k,y^k), \label{yupdate}\\
	v^{k+1}_i &= v^k_i - \eta_k d_{v_i}^k, \label{vupdate}
\end{align}
where $\beta$ and $\eta_k$ are positive step sizes, and
\begin{equation}\label{directionv}
	d_{v_i}^k = \nabla_{yy}^2 f (x^k,y^k) v_i^k-\nabla_{y} F_i (x^k,y^k).
\end{equation}
Note that \eqref{yupdate} is exactly doing a gradient descent step on $f(x^k, \cdot)$ from $y^k$, following the direction $-\nabla_{y} f(x^k,y^k)$ to  approximate $y^*(x^k)$. And \eqref{vupdate} corresponds to solve $\nabla_{yy}^2 f (x^k,y^k) v_i=\nabla_{y} F_i (x^k,y^k)$ by following the direction $-d_{v_i}^k$ from $v_i^k$ to approximate $v^*_i(x^k)$.

With the approximation $d_{\varphi_i}^k$ of $\nabla\varphi_i(x^k)$ for $i = 1, \ldots, m$ at each iteration on $(x^k, y^k, v^k_i)$, inspired by the proximal gradient methods for multi-objective optimization in  \cite{tanabe2019proximal}, we propose solving the following strongly convex optimization problem to update the next $x^{k+1}$,
\begin{equation}\label{updatex}
	x^{k+1}=  \underset{x \in \mathcal{X}}{\mathrm{argmin}}  \left\{\max_{i = 1, \ldots, m} h_i ^k (x) + \frac{1}{2\alpha_k} \|x - x^k\|^2 \right\},
\end{equation}
where 
\begin{equation}\label{hdefine}
	h_i ^k (x) =  \langle d_{\varphi_i}^k, x - x^{k}\rangle  + g_i(x) - g_i(x^k).
\end{equation}

Now, we are ready to present our proposed gradient-based algorithm for solving MOBLO \eqref{MOBLO}, namely gMOBA, in Algorithm \ref{alg1}. It should be noticed that the proposed gMOBA is a parallelizable algorithm because the updates of $(x^k, y^k, v^k)$ can be implemented simultaneously.

\begin{algorithm}[thb]
	\caption{gradient-based Multi-Objective Bilevel Algorithm (gMOBA) }
	\label{alg1}\small
	\begin{algorithmic}[1] 
		\STATE {\bfseries Input:} initial points $x^0,y^0,v^0$, positive stepsizes $\alpha_k,\beta,\eta_k$;
		\FOR{$k=0,1,\dots,K-1$} 
		\STATE update $y^{k+1}=y^k-\beta\nabla_{y}f(x^k,y^k)$;
		
		\STATE for $i = 1, \ldots, m$, update $v^{k+1}_i = v^k_i - \eta_k d_{v_i}^k$ with $d_{v_i}^k$ defined in Eq. \eqref{directionv};
		
		\STATE  update $x^{k+1}$ by solving
		\[
		x^{k+1}=  \underset{x \in \mathcal{X}}{\mathrm{argmin}}  \left\{\max_{i = 1, \ldots, m} h_i ^k (x) + \frac{1}{2\alpha_k} \|x - x^k\|^2 \right\}, 
		\]
		where $h_i ^k (x)$ is defined in Eq. \eqref{hdefine}.
		\ENDFOR
	\end{algorithmic}
\end{algorithm}

\begin{remark}
	It should be mentioned that though our proposed gMOBA is designed for the MOBLO \eqref{MOBLO}, where the LL objective is a scalar function, gMOBA can also handle a special class of MOBLO with multiple objectives on both the upper- and lower-levels. 
	We consider a MOBLO problem in the form as follows,
	\begin{equation}\label{blo_problem0}
		\begin{aligned}
			\mathop{\min}\limits_{x \in \mathcal{X},\, y \in \mathbb{R}^{n_2} } 
			& (F_1(x,y) + g_1(x), \ldots, F_m(x,y) + g_m(x)), \\
			\mathrm{s.t.} \quad & 
			y \in \mathcal{P}(x),
		\end{aligned}
	\end{equation}
	where $\mathcal{P}(x)$ is the set of efficient solutions of the lower-level  multi-objective optimization problem:
	\begin{equation}
		\min_{y\in\mathbb{R}^{n_2}}\, \left(f_1(x, y), \dots, f_l(x, y)\right).
	\end{equation}
	This problem is also known as the semivectorial bilevel optimization problem (cf. \cite{bonnel2006semivectorial, dempe2013new, andreani2019bilevel}).
	We suppose that $f_1, \dots, f_l$ are all strongly convex w.r.t. $y$. It holds that $y$ in $\mathcal{P}(x)$ if and only if there is a $\lambda\in\Delta_l:=\{\lambda\in\mathbb{R}^l: \lambda\geq0, e^T \lambda=1\}$ such that $y$ solves $\min_{y}\sum_{j=1}^l \lambda_j f_j(x,y)$, see Section 3.1 in \cite{ehrgott2005multicriteria}. Then this classical result in MOO implies that Problem \eqref{blo_problem0} can be equivalently reformulated as 
	\begin{equation}\label{blo_problem3}
		\begin{aligned}
			\mathop{\min}\limits_{(x, \lambda) \in \mathcal{X}\times \Delta_l,\, y \in\mathbb{R}^{n_2}} 
			& (F_1(x,y) + g_1(x), \ldots, F_m(x,y) + g_m(x)),  \\
			\mathrm{s.t.} \quad & 
			y \in \S(x, \lambda):=\underset{y \in \mathbb{R}^{n_2}}{\mathrm{argmin}} \,\sum_{j=1}^l \lambda_j f_j(x,y),
		\end{aligned}
	\end{equation}
	which is a special case of the MOBLO \eqref{MOBLO}. 
	Applying the weighted-sum-scalarization technique to the multiobjective lower-level problem of \eqref{blo_problem0} is not new, cf. \cite{dempe2019semivectorial}. 
	It can be shown that if \eqref{blo_problem0} satisfies Assumption \ref{Assump}, and Assumption \ref{Assump}(a) is replaced by that for any $x \in \mathcal{X}$, the LL objectives $f_1, \dots, f_l$ are all $\mu$-strongly convex with respect to the LL variable $y$, then \eqref{blo_problem3} satisfies Assumption \ref{Assump}. And then, under these assumptions, our proposed gMOBA can be used for solving \eqref{blo_problem0}  by applying Algorithm \ref{alg1} on \eqref{blo_problem3}.
\end{remark}

 \subsection{Convergence Analysis}
\subsubsection{Preliminary Results on gMOBA}
Before presenting the convergence analysis on gMOBA, we first recall the following useful lemmas. We recall that Assumptions \ref{Assump} is assumed to hold throughout this part and let  $C_{F_{\y}}$ denote the upper bound of $\|\nabla_{y}F(x,y)\|$ on $\mathcal{X} \times \mathbb{R}^{n_2}$, and $L_{F_{y}}$, $L_{f_{y}}$, $L_{f_{yy}}$ and $L_{f_{xy}}$ denote the Lipschitz constant of $\nabla_{y}F(x,y)$, $\nabla_{y}f(x,y)$, $\nabla_{yy}^2f(x,y)$ and $\nabla_{xy}^2f(x,y)$ on $\mathcal{X} \times \mathbb{R}^{n_2}$, respectively.

\begin{lemma}\label{lemma*}\cite{ghadimi2018approximation}
	The following statements hold.
	\begin{enumerate}[(i)]
		\item The function $y^*(x)$ is Lipschitz continuous in $x \in \mathcal{X}$, i.e., for all $x, x' \in \mathcal{X}$, 
		\begin{equation}
			\left\| y^*(x)- y^*(x')\right\|\leq \frac{L_{f_{y}}}{\mu}
			\|x-x'\|.
		\end{equation}
		
		\item Both $\nabla y^*(x)$ and $v^*(x)$ are uniformly bounded for $x \in \mathcal{X}$, i.e., 
		\begin{equation}
			\left\|\nabla y^*(x)\right\|=\left\|\left[\nabla_{yy}^2 f(x,y^*(x))\right]^{-1}\nabla_{yx}^2 f(x,y^*(x))\right\|
			\leq \frac{L_{f_{y}}}{\mu},
		\end{equation}
		and
		\begin{equation}
			\|v_i^*(x)\| = \left\|\left[\nabla_{yy}^2 f(x,y^*(x))\right]^{-1}\nabla_{y} F_i(x,y^*(x))\right\|\leq \frac{C_{F_{\y}}}{\mu}, \quad i = 1, \ldots, m,
		\end{equation}
		where $v_i^*(x) := \left[\nabla_{yy}^2 f(x,y^*(x))\right]^{-1}\nabla_{y} F_i(x,y^*(x))$, $i = 1, \ldots, m$.
		\item The functions $v_i^*(x)$, $i = 1, \ldots, m,$ are Lipschitz continuous in $x \in \mathcal{X}$, that is, for all $x, x' \in \mathcal{X}$,
		\begin{align}
			\left\| v_i^*(x)- v_i^*(x')\right\|\leq \frac{L_{v}}{\mu^3}
			\|x-x'\|, \quad i = 1, \ldots, m,
		\end{align}
		where 
		\begin{align*}
			L_{v}=C_{F_{y}} L_{f_{yy}} L_{f_{y}}+\mu(C_{F_{y}} L_{f_{yy}}+L_{F_{y}} L_{f_{y}})+\mu^2 L_{F_{y}}.
		\end{align*}
	\end{enumerate}
\end{lemma}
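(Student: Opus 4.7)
The lemma is the standard Ghadimi--Wang toolbox for single-level reformulations of bilevel problems; the plan is to obtain (i) from strong convexity applied to the optimality condition defining $y^*$, obtain (ii) by combining the implicit function theorem with the spectral bound $\|[\nabla_{yy}^2 f]^{-1}\|\le 1/\mu$, and obtain (iii) by a resolvent-difference expansion. I would assume Assumption \ref{Assump} throughout and reuse the constants $L_{f_y}$, $L_{f_{yy}}$, $L_{F_y}$, $C_{F_{\y}}$ without reproving their existence.

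For (i), I would begin from $\nabla_y f(x,y^*(x))=0$, which holds by the unconstrained strong convexity of $f(x,\cdot)$. Writing $\nabla_y f(x,y^*(x))-\nabla_y f(x',y^*(x'))=0$, I would add and subtract $\nabla_y f(x',y^*(x))$ and apply $\mu$-strong convexity in $y$ to the first difference and Lipschitz continuity of $\nabla_y f$ in $x$ to the second, yielding $\mu\|y^*(x)-y^*(x')\|\le L_{f_y}\|x-x'\|$. For the bound on $\nabla y^*(x)$ in (ii), I would differentiate the identity $\nabla_y f(x,y^*(x))\equiv 0$ to get $\nabla y^*(x)=-[\nabla_{yy}^2 f(x,y^*(x))]^{-1}\nabla_{yx}^2 f(x,y^*(x))$; the first factor has norm $\le 1/\mu$ by strong convexity and the second $\le L_{f_y}$ by Lipschitzness of $\nabla_y f$. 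The bound on $v_i^*(x)$ is immediate from the same resolvent bound combined with $\|\nabla_y F_i\|\le C_{F_\y}$.

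The technical heart of the lemma is (iii). I would expand
\[
v_i^*(x)-v_i^*(x')=\bigl[\nabla_{yy}^2 f(x,y^*(x))\bigr]^{-1}\bigl(\nabla_y F_i(x,y^*(x))-\nabla_y F_i(x',y^*(x'))\bigr)+\Bigl(\bigl[\nabla_{yy}^2 f(x,y^*(x))\bigr]^{-1}-\bigl[\nabla_{yy}^2 f(x',y^*(x'))\bigr]^{-1}\Bigr)\nabla_y F_i(x',y^*(x'))
\]
and estimate the two pieces separately. The first piece is handled by the Lipschitz constants $L_{F_y}$ and (i) just proved, giving a bound of $(L_{F_y}/\mu)(1+L_{f_y}/\mu)\|x-x'\|$. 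The second piece uses the resolvent identity $A^{-1}-B^{-1}=A^{-1}(B-A)B^{-1}$, bounded through $1/\mu^2$, together with the Lipschitzness of $\nabla_{yy}^2 f$ (constant $L_{f_{yy}}$) composed with (i); multiplying by $C_{F_{\y}}$ produces $(C_{F_\y}L_{f_{yy}}/\mu^2)(1+L_{f_y}/\mu)\|x-x'\|$. Adding the two bounds and placing them over the common denominator $\mu^3$, I would expand and regroup the numerator to recognize it as $C_{F_y}L_{f_{yy}}L_{f_y}+\mu(C_{F_y}L_{f_{yy}}+L_{F_y}L_{f_y})+\mu^2 L_{F_y}=L_v$.

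The main obstacle is bookkeeping in (iii): keeping the telescoping $(x,y^*(x))\to(x',y^*(x'))$ clean when applying the mean-value/Lipschitz step to $\nabla_{yy}^2 f$, and ensuring the algebraic recombination matches the stated $L_v$ precisely. Everything else is routine given Assumption \ref{Assump} and the implicit function theorem.
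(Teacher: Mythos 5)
Your proposal is correct and is essentially the standard argument: the paper itself offers no proof of this lemma, citing it directly from \cite{ghadimi2018approximation}, and your reconstruction (strong monotonicity plus Lipschitzness for (i), the implicit-function identity with the spectral bound $\|[\nabla_{yy}^2 f]^{-1}\|\le 1/\mu$ for (ii), and the resolvent-difference decomposition for (iii)) is precisely the route taken there; your algebra in (iii) does recover the stated $L_v$ exactly. The only blemish is a harmless transposition in (i), where you attach strong convexity to the $x$-difference and Lipschitzness to the $y$-difference rather than the other way around; the intended argument is clear and the bound $\mu\|y^*(x)-y^*(x')\|\le L_{f_y}\|x-x'\|$ follows as you state.
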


\begin{lemma}\label{lem12}\cite{ghadimi2018approximation}
	There exists $L_{\varphi} > 0$ such that for each $i = 1, \ldots, m$,
	\begin{equation}
		\|\nabla \varphi_i(x) -\nabla \varphi_i(x')\|\leq \frac{L_\varphi}{\mu^3}
		\|x-x'\|\quad \forall \,x, x'\in \mathcal{X}.
	\end{equation}
\end{lemma}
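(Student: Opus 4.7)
The plan is to start from the explicit formula \eqref{hypergradient} for $\nabla\varphi_i$ and bound $\|\nabla\varphi_i(x)-\nabla\varphi_i(x')\|$ term by term, using only Assumption~\ref{Assump} together with Lemma~\ref{lemma*}. Concretely, I split
\[
\nabla\varphi_i(x)-\nabla\varphi_i(x') = \underbrace{\bigl[\nabla_x F_i(x,y^*(x))-\nabla_x F_i(x',y^*(x'))\bigr]}_{T_1} - \underbrace{\bigl[\nabla_{xy}^2 f(x,y^*(x))\,v_i^*(x)-\nabla_{xy}^2 f(x',y^*(x'))\,v_i^*(x')\bigr]}_{T_2}.
\]
For $T_1$, I would invoke Lipschitz continuity of $\nabla_x F_i$ on $\mathcal{X}\times\mathbb{R}^{n_2}$ (Assumption~\ref{Assump}(c)) and chain it with the Lipschitz bound $\|y^*(x)-y^*(x')\|\le (L_{f_y}/\mu)\|x-x'\|$ from Lemma~\ref{lemma*}(i), yielding a constant of order $1/\mu$.

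For $T_2$, I would add and subtract $\nabla_{xy}^2 f(x',y^*(x'))\,v_i^*(x)$ to split it into
\[
T_2 = \bigl[\nabla_{xy}^2 f(x,y^*(x))-\nabla_{xy}^2 f(x',y^*(x'))\bigr]v_i^*(x) + \nabla_{xy}^2 f(x',y^*(x'))\bigl[v_i^*(x)-v_i^*(x')\bigr].
\]
The first piece is bounded by combining the Lipschitz continuity of $\nabla_{xy}^2 f$ (Assumption~\ref{Assump}(b)), again composed with Lemma~\ref{lemma*}(i) for $y^*$, and the a priori bound $\|v_i^*(x)\|\le C_{F_y}/\mu$ from Lemma~\ref{lemma*}(ii); this contributes a factor of order $1/\mu^2$. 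The second piece uses uniform boundedness of $\nabla_{xy}^2 f$ together with Lemma~\ref{lemma*}(iii), which supplies the dominant $L_v/\mu^3$ factor on $\|v_i^*(x)-v_i^*(x')\|$.

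Summing the three contributions, every term is bounded by $C\|x-x'\|$ with $C$ of the form $c/\mu^{p}$ for $p\in\{1,2,3\}$. Since $\mu\le L_{f_y}$ in the regime of interest (otherwise $f$ would not be strongly convex with a well-defined Lipschitz gradient), we can absorb all denominators into a single $1/\mu^3$ and define $L_\varphi$ as the resulting composite constant built from $\mu$, $L_{F_y}$, $C_{F_y}$, $L_{f_y}$, $L_{f_{yy}}$, $L_{f_{xy}}$, and the Lipschitz constant of $\nabla_x F_i$. The only delicate point is the accounting in $T_2$: one must be careful that the Lipschitz estimate for $v_i^*$ already carries $\mu^3$ in its denominator (as stated in Lemma~\ref{lemma*}(iii)), so it is this step that sets the overall exponent. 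Everything else is bookkeeping; no new analytic idea beyond the chain rule, add-and-subtract, and the previously established bounds from Lemma~\ref{lemma*} is needed.
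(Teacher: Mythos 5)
Your argument is correct, and it is essentially the standard one: the paper itself gives no proof of this lemma, importing it directly from \cite{ghadimi2018approximation}, whose proof proceeds exactly as you describe --- decompose via the explicit hypergradient formula \eqref{hypergradient}, add and subtract to isolate the $v_i^*$ increment, and invoke the Lipschitz/boundedness estimates of Lemma~\ref{lemma*} together with Assumption~\ref{Assump}. Your bookkeeping is also sound: the bound $\|\nabla_{xy}^2 f\|\le L_{f_y}$ follows from the Lipschitz continuity of the first derivative in Assumption~\ref{Assump}(b), and the inequality $\mu\le L_{f_y}$ legitimately lets you absorb the lower-order $1/\mu$ and $1/\mu^2$ contributions into a single $L_\varphi/\mu^3$.
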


\subsubsection{Fundamental Descent Lemmas}

To analyze the convergence of gMOBA towards Pareto stationary points, we identify an intrinsic Lyapunov (potential) function for the generated sequence $\{(x^k,y^k,v^k_i)\}$ by gMOBA as: for $i = 1, \ldots, m$,
\begin{equation}\label{Lyapunov}
	\begin{aligned}
		V_i^k:= &\varphi_i(x^k) + g_i(x^k)
		+\|y^k-y^*(x^k)\|^2
		+ \|v_i^k-v^*_i(x^k)\|^2.
	\end{aligned}
\end{equation}
Note that the first term quantifies the $i$-th overall UL objective, the second one delineates the LL solution error, and the last term characterizes the error of $v_i$ solving the linear system \eqref{eqv}. Note that the Lyapunov function defined here is nonsmooth, different from the existing results in BLO literature. 

Now we study the descent of these  Lyapunov functions, from which one can get the convergence of $
\|x^{k+1} - x^{k}\|$, $\|y^{k}- y^*(x^{k}) \|$ and $ \|v_i^{k} - v_i^*(x^{k})\| $, $ i = 1, \ldots, m$. We first upper-bound the descent of the multiple nonsmooth overall UL objectives.

\begin{lemma}\label{lemmaFk}
	The sequence of $(x^k,y^k,v^k_i)$ generated by Algorithm\ref{alg1} satisfies that for each $i = 1, \ldots, m$,
	\begin{equation}
		\begin{aligned}
			\Phi_i(x^{k+1}) - \Phi_{i}(x^k)
			\leq &-\frac{1}{2}\left(\frac{1}{2\alpha_k}-\frac{L_\varphi}{\mu^3}\right)\|x^{k+1}-x^k\|^2
			+ 2\alpha_k\left( L_{F_{x}} + \frac{C_{F_{\y}} L_{f_{xy}}}{\mu}\right)^2 \|y^{k}-y^*(x^k)\|^2 \\
			&+ 2\alpha_k L_{f_{y}}^2 \|v_i^{k}-v^*_i(x^k)\|^2.
		\end{aligned}
	\end{equation}
\end{lemma}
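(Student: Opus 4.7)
The plan is to combine three ingredients: the descent lemma for the smooth part $\varphi_i$ (whose Lipschitz gradient is controlled by Lemma 3.2), the optimality of $x^{k+1}$ in the proximal-type subproblem \eqref{updatex}, and a bound on the inexactness $\|\nabla\varphi_i(x^k)-d_{\varphi_i}^k\|$ in terms of the LL and linear-system errors.

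First I would split $\Phi_i(x^{k+1})-\Phi_i(x^k)=[\varphi_i(x^{k+1})-\varphi_i(x^k)]+[g_i(x^{k+1})-g_i(x^k)]$. By Lemma \ref{lem12}, $\nabla\varphi_i$ is $(L_\varphi/\mu^3)$-Lipschitz, so the standard descent lemma yields
\[
\varphi_i(x^{k+1})-\varphi_i(x^k)\le \langle\nabla\varphi_i(x^k),x^{k+1}-x^k\rangle+\frac{L_\varphi}{2\mu^3}\|x^{k+1}-x^k\|^2.
\]
For the nonsmooth part, by the very definition \eqref{hdefine} of $h_i^k$, $g_i(x^{k+1})-g_i(x^k)=h_i^k(x^{k+1})-\langle d_{\varphi_i}^k,x^{k+1}-x^k\rangle$. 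Since $h_j^k(x^k)=0$ for every $j$, plugging $x=x^k$ into the subproblem \eqref{updatex} shows that the minimum value is at most $0$, hence $\max_j h_j^k(x^{k+1})+\frac{1}{2\alpha_k}\|x^{k+1}-x^k\|^2\le 0$, and in particular $h_i^k(x^{k+1})\le -\frac{1}{2\alpha_k}\|x^{k+1}-x^k\|^2$ for each $i$.

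Next, I would bound the hypergradient error. Writing
\[
\nabla\varphi_i(x^k)-d_{\varphi_i}^k=\bigl[\nabla_x F_i(x^k,y^*(x^k))-\nabla_x F_i(x^k,y^k)\bigr]-\bigl[\nabla_{xy}^2 f(x^k,y^*(x^k))v_i^*(x^k)-\nabla_{xy}^2 f(x^k,y^k)v_i^k\bigr],
\]
and inserting $\pm\nabla_{xy}^2 f(x^k,y^k)v_i^*(x^k)$ in the last bracket, I get from Assumption \ref{Assump}, Lemma \ref{lemma*}(ii) and $\|\nabla_{xy}^2 f\|\le L_{f_y}$,
\[
\|\nabla\varphi_i(x^k)-d_{\varphi_i}^k\|\le \Bigl(L_{F_x}+\tfrac{C_{F_y}L_{f_{xy}}}{\mu}\Bigr)\|y^k-y^*(x^k)\|+L_{f_y}\|v_i^k-v_i^*(x^k)\|.
\]

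Finally, I would combine the above with Young's inequality $\langle a,b\rangle\le \alpha_k\|a\|^2+\frac{1}{4\alpha_k}\|b\|^2$ applied to $a=\nabla\varphi_i(x^k)-d_{\varphi_i}^k$ and $b=x^{k+1}-x^k$, squaring the error bound via $(p+q)^2\le 2p^2+2q^2$. Collecting the coefficients of $\|x^{k+1}-x^k\|^2$ produces exactly $-\frac{1}{2}(\frac{1}{2\alpha_k}-\frac{L_\varphi}{\mu^3})$, while the $\|y^k-y^*(x^k)\|^2$ and $\|v_i^k-v_i^*(x^k)\|^2$ terms inherit the announced constants $2\alpha_k(L_{F_x}+C_{F_y}L_{f_{xy}}/\mu)^2$ and $2\alpha_k L_{f_y}^2$. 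The only mild subtlety is step two — ensuring that the variational inequality derived from the minimax structure of \eqref{updatex} indeed yields a pointwise bound on each individual $h_i^k(x^{k+1})$ rather than merely on the maximum; the observation that $h_j^k(x^k)=0$ for all $j$ is what makes this step clean and is, I expect, the key conceptual point of the argument.
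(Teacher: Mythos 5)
Your proposal is correct and follows essentially the same route as the paper's proof: the suboptimality of $x^k$ in the proximal subproblem (via $h_j^k(x^k)=0$ for all $j$, which is exactly how the paper extracts a bound on each individual $h_i^k(x^{k+1})$ from the max), the $(L_\varphi/\mu^3)$-descent lemma, the three-term splitting of $\nabla\varphi_i(x^k)-d_{\varphi_i}^k$, and Young's inequality with the $\alpha_k$/$\tfrac{1}{4\alpha_k}$ weights followed by $(p+q)^2\le 2p^2+2q^2$. No gaps; the step you flag as the "mild subtlety" is handled identically in the paper.
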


\begin{proof}
	By the update rule of $x^{k+1}$, we have for each $i = 1, \ldots, m$,
	\begin{equation}\label{lem3_eq1}
		\begin{aligned}
			&\langle d_{\varphi_i} ^k, x^{k+1} - x^{k}\rangle + g_i(x^{k+1}) - g_i(x^k) + \frac{1}{2\alpha_k}\|x^{k+1} - x^{k}\|^2 \\\le \, & \max_{i = 1, \ldots, m}\left\{ \langle d_{\varphi_i}^k, x^{k+1} - x^{k}\rangle + g_i(x^{k+1}) - g_i(x^k) \right\} + \frac{1}{2\alpha_k}\|x^{k+1} - x^{k}\|^2 \\
			\le\, &\max_{i = 1, \ldots, m}\left\{ \langle d_{\varphi_i}^k, x^{k} - x^{k}\rangle + g_i(x^{k}) - g_i(x^k) \right\} + \frac{1}{2\alpha_k}\|x^{k} - x^{k}\|^2 = 0.
		\end{aligned}
	\end{equation}
	Next, by the $\frac{L_\varphi}{\mu^3}$-smooth of $\varphi_{i}(\cdot)$ established in Lemma \ref{lem12}, we have that for each $i = 1, \ldots, m$,
	\begin{equation}\label{lem3_eq2}
		\varphi_i(x^{k+1}) - \varphi_i(x^{k}) \leq \big\langle \nabla \varphi_{i}(x^k), x^{k+1} - x^k \big\rangle+\frac{L_\varphi}{2\mu^3}\|x^{k+1}-x^k\|^2 .
	\end{equation}
	Summing up \eqref{lem3_eq1} and \eqref{lem3_eq2}, we have that for each $i = 1, \ldots, m$,
	\[
	\begin{aligned}
		& \varphi_i(x^{k+1})  + g_i(x^{k+1}) - \varphi_i(x^{k}) - g_i(x^k) + \frac{1}{2\alpha_k}\| x^{k+1} - x^{k}\|^2 \\  
		\leq\ & \big\langle \nabla \varphi_{i}(x^k) - d_{\varphi_i}^k, x^{k+1} - x^k \big\rangle + \frac{L_\varphi}{2\mu^3}\|x^{k+1}-x^k\|^2 .
	\end{aligned}
	\]
	Note that
	\[
	\begin{aligned}
		\big\langle \nabla \varphi_{i}(x^k) -d_{\varphi_i}^k, x^{k+1}-x^k \big\rangle 
		& \le \alpha_k \| \nabla \varphi_{i}(x^k) - d_{\varphi_i}^k\|^2 + \frac{1}{4\alpha_k}\|x^{k+1}-x^k\|^2.
	\end{aligned}
	\]
	Since 
	\begin{equation*}
		\nabla \varphi_{i}(x^k)=\nabla_{x} F_i(x^k,y^*(x^k))-\nabla_{xy}^2f(x^k,y^*(x^k)) v_i^*(x^k),
	\end{equation*}
	we have
	\[
	\begin{aligned}
		\| 	\nabla \varphi_{i}(x^k) - d_{\varphi_i}^k\| = &\|\nabla \varphi_{i}(x^k) - \nabla_{x} F_i(x^k,y^{k}) + \nabla_{xy}^2 f(x^k,y^{k})v_i^{k}\|\\
		\leq  & \|\nabla_{x} F_i(x^k, y^*(x^k))-\nabla_{x} F_i(x^k,y^{k})\|
		+\left\|\left[\nabla_{xy}^2 f(x^k, y^*(x^k))-\nabla_{xy}^2 f(x^k,y^{k})\right]  v^*_i(x^k)\right\|\\
		&+\left\|\nabla_{xy}^2 f(x^k, y^{k})[v^*_i(x^k)-v_i^{k}]\right\|\\
		\leq &\left( L_{F_{x}} + \frac{C_{F_{\y}} L_{f_{xy}}}{\mu}\right) \|y^{k}-y^*(x^k)\|
		+L_{f_{y}} \|v_i^{k}-v^*_i(x^k)\|.
	\end{aligned}
	\]
	The desired result follows from the inequality $\bigl(\sum_{i=1}^{r} a_i\bigr)^2\leq r\sum_{i=1}^{r} a_i^2$. 
	
\end{proof}

Note that the descent of the overall UL objectives depends on the errors of $y^k$ and $v_i^k$. We next analyze these errors. 

\begin{lemma}\label{lemmayv}
	If we choose 
	\begin{equation}
		\beta\leq\frac{2}{\mu+L_{f_y}},\quad \eta_k\leq \frac{1}{L_{f_{y}}}.
	\end{equation}
	Then the sequence of $(x^k,y^k,v^k_i)$ generated by gMOBA satisfies
	\begin{equation*}
		\|y^{k+1}-y^*(x^k)\|^2
		\leq 
		\left(1-{\mu }\beta\right)\|y^k-y^*(x^k)\|^2,
	\end{equation*}
	and for each $i = 1, \ldots, m$,
	\begin{align*}
		\|v_i^{k+1}-v^*_i(x^k)\|^2
		\leq &
		\left(1-\eta_k \mu\right)\|v^k_i-v^*_{i}(x^k)\|^2 + 2\eta_k \frac{\left(L_{f_{yy}}C_{F_y} +L_{F_y}\mu\right)^2}{\mu^3}\|y^{k}-y^*(x^k)\|^2.
	\end{align*}
\end{lemma}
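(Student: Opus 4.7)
The two bounds decouple, so I plan to treat them in sequence. For the first inequality, the key observation is that $\nabla_{y} f(x^k, y^*(x^k))=0$ by optimality of $y^*(x^k)$ for $f(x^k,\cdot)$. Hence the update can be written as
\[
y^{k+1}-y^*(x^k) \;=\; \bigl(y^k - y^*(x^k)\bigr) - \beta\bigl[\nabla_y f(x^k,y^k) - \nabla_y f(x^k,y^*(x^k))\bigr].
\]
Squaring, this becomes the standard one-step gradient descent identity for the $\mu$-strongly convex, $L_{f_y}$-smooth function $f(x^k,\cdot)$. Plugging in the co-coercivity estimate (the Nesterov-type bound $\langle \nabla g(y)-\nabla g(y'),y-y'\rangle \ge \tfrac{\mu L}{\mu+L}\|y-y'\|^2 + \tfrac{1}{\mu+L}\|\nabla g(y)-\nabla g(y')\|^2$), the cross term dominates the quadratic gradient term precisely when $\beta\leq 2/(\mu+L_{f_y})$, yielding a contraction factor $1-\tfrac{2\beta\mu L_{f_y}}{\mu+L_{f_y}}\le 1-\beta\mu$ (using $L_{f_y}\ge\mu$). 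This gives the first claim.

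For the second inequality, I plan to introduce and exploit the exact identity $\nabla_{yy}^2 f(x^k,y^*(x^k))\,v_i^*(x^k)=\nabla_y F_i(x^k,y^*(x^k))$. Adding and subtracting $\eta_k\nabla_{yy}^2 f(x^k,y^k)v_i^*(x^k)$ in the $v$-update, I would decompose
\[
v_i^{k+1}-v_i^*(x^k) \;=\; \bigl(I-\eta_k\nabla_{yy}^2 f(x^k,y^k)\bigr)\bigl(v_i^k-v_i^*(x^k)\bigr) \;+\; \eta_k\,r_i^k,
\]
where the residual $r_i^k = \bigl[\nabla_{yy}^2 f(x^k,y^*(x^k))-\nabla_{yy}^2 f(x^k,y^k)\bigr]v_i^*(x^k) + \bigl[\nabla_y F_i(x^k,y^k)-\nabla_y F_i(x^k,y^*(x^k))\bigr]$ only exists because $y^k\ne y^*(x^k)$. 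Since $\nabla_{yy}^2 f(x^k,y^k)$ has spectrum in $[\mu,L_{f_y}]$, the choice $\eta_k\leq 1/L_{f_y}$ gives $\|I-\eta_k\nabla_{yy}^2 f(x^k,y^k)\|\leq 1-\eta_k\mu$. The residual is controlled via the Lipschitz bounds on $\nabla_{yy}^2 f$ and $\nabla_y F_i$ together with the a priori bound $\|v_i^*(x^k)\|\le C_{F_y}/\mu$ from Lemma~\ref{lemma*}(ii), producing $\|r_i^k\|\le \frac{L_{f_{yy}}C_{F_y}+L_{F_y}\mu}{\mu}\,\|y^k-y^*(x^k)\|$.

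Taking norms and squaring then yields an inequality of the form $(a+b)^2$ with $a=(1-\eta_k\mu)\|v_i^k-v_i^*(x^k)\|$ and $b=\eta_k\tfrac{L_{f_{yy}}C_{F_y}+L_{F_y}\mu}{\mu}\|y^k-y^*(x^k)\|$. Applying Young's inequality $2ab \le \lambda a^2 + b^2/\lambda$ with $\lambda$ chosen so that $(1-\eta_k\mu)^2 + (1-\eta_k\mu)\eta_k\mu = 1-\eta_k\mu$ absorbs the cross term into the leading $\|v_i^k-v_i^*(x^k)\|^2$ coefficient, leaving a bound on the $\|y^k-y^*(x^k)\|^2$ coefficient that is at most $2\eta_k(L_{f_{yy}}C_{F_y}+L_{F_y}\mu)^2/\mu^3$, matching the stated form.

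The only delicate step is the second one: writing the right telescoping identity so that the Hessian $\nabla_{yy}^2 f(x^k,y^k)$ (evaluated at the current $y^k$) governs the contraction while the residual absorbs all the $y^k$-versus-$y^*(x^k)$ discrepancy. The contraction part is then standard once one notices that the update is precisely one gradient step on the strongly convex quadratic $v \mapsto \tfrac{1}{2}\langle v,\nabla_{yy}^2 f(x^k,y^k)v\rangle - \langle\nabla_y F_i(x^k,y^k),v\rangle$. Everything else is bookkeeping with Lipschitz constants and Young's inequality.
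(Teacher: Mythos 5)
Your proposal is correct and follows essentially the same route as the paper: the first bound is the standard strongly-convex/smooth gradient-descent contraction (the paper simply cites Theorem 10.29 of \cite{beck2017first}, whose proof is exactly the co-coercivity argument you sketch), and for the second bound you use the identical decomposition $v_i^{k+1}-v_i^*(x^k)=(I-\eta_k\nabla_{yy}^2 f(x^k,y^k))(v_i^k-v_i^*(x^k))+\eta_k r_i^k$, the same residual estimate via Lemma \ref{lemma*}(ii), and the same Young-type weighting (the paper takes $\varepsilon=\eta_k\mu$ in $(1+\varepsilon)a^2+(1+1/\varepsilon)b^2$ and then uses $\eta_k^2\le\eta_k/\mu$, which matches your absorption of the cross term). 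No gaps.
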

\begin{proof}
	The first estimation follows from Theorem 10.29 in  \cite{beck2017first}.
	By the update of $v_i^{k+1}$ and using $\nabla_{yy}^2 f(x^k,y^*(x^k))v^*_i(x^k)= \nabla_{y} F(x^k,y^*(x^k))$, we have 
	\begin{align*}
		v_i^{k+1} - v^*_i(x^k)
		=\,&v_i^k-v^*_i(x^k)-\eta_k\left(\nabla_{yy}^2 f(x^k,y^{k})v_i^k - \nabla_{y} F(x^k,y^{k}) \right) \\
		=\,&\left[ I - \eta_{k} \nabla_{yy}^2 f(x^k,y^{k})\right](v_i^k - v^*_i(x^k))
		-\eta_{k}\left[\nabla_{yy}^2 f(x^k,y^{k}) - \nabla_{yy}^2 f(x^k,y^*(x^k)) \right]v^*_i(x^k) \\
		& - \eta_k \left(\nabla_{y} F(x^k,y^*(x^k)) -  \nabla_{y} F(x^k,y^{k})\right).
	\end{align*}
	Thus, for any $\varepsilon > 0$, we have 
	\begin{equation}\label{lem4_eq1}
		\begin{aligned}
			\| v_i^{k+1} - v^*_i(x^k) \|^2 \le \, &(1+ \varepsilon) \| \left[ I - \eta_{k} \nabla_{yy}^2 f(x^k,y^{k})\right](v_i^k - v^*_i(x^k))  \|^2 \\
			& + \Big(1 + \frac{1}{\varepsilon}\Big) \eta_k^2\| \left[\nabla_{yy}^2 f(x^k,y^{k}) - \nabla_{yy}^2 f(x^k,y^*(x^k)) \right] v^*_i(x^k) \\
			& + \| \nabla_{y} F(x^k,y^*(x^k)) -  \nabla_{y} F(x^k,y^{k}) \|^2.
		\end{aligned}
	\end{equation}
	As $f(x,y)$ is $\mu$-strongly convex with respect to $y$, we have $\nabla_{yy}^2 f(x^k,y^{k}) \succeq \mu I$ and when $\eta_k\leq \frac{1}{L_{f_{y}}}$. it holds that
	\[
	\| \left[ I - \eta_{k} \nabla_{yy}^2 f(x^k,y^{k})\right](v_i^k - v^*_i(x^k))  \| \le (1- \eta_{k}\mu)\| v_i^k - v^*_i(x^k) \|.
	\]
	Furthermore, the Lipschitz continuity of $\nabla_{yy}^2 f$ and $\nabla_{y} F$ yields that
	\[
	\begin{aligned}
		&\left\| \left[\nabla_{yy}^2 f(x^k,y^{k}) - \nabla_{yy}^2 f(x^k,y^*(x^k)) \right] v^*_i(x^k) + \nabla_{y} F(x^k,y^*(x^k)) -  \nabla_{y} F(x^k,y^{k}) \right\| \\
		\le & (L_{f_{yy}}\|v^*_i(x^k)\| + L_{F_y} ) \|y^{k} - y^*(x^k)\| \\
		\le & \frac{L_{f_{yy}}C_{F_y} +L_{F_y}\mu}{\mu} \|y^{k}-y^*(x^k)\|,
	\end{aligned}
	\]
	where the last equality follows from Lemma \ref{lemma*}.
	Then, taking $\varepsilon = \eta_k\mu$ in \eqref{lem4_eq1} implies that
	\[
	\begin{aligned}
		\| v_i^{k+1} - v^*_i(x^k) \|^2 \le (1+ \eta_{k}\mu)\left(1-\eta_k \mu\right)^2\|v^k_i-v^*_{i}(x^k)\|^2 + \left(1 + \frac{1}{\eta_k \mu}\right)\eta_k^2\left(\frac{L_{f_{yy}}C_{F_y} +L_{F_y}\mu}{\mu} \right)^2 \|y^{k}-y^*(x^k)\|^2.
	\end{aligned}
	\]
	Then the conclusion follows from the fact that $\eta_k^2 \le \eta_k/L_{f_{y}} \le \eta/\mu$.
\end{proof}

\begin{lemma}\label{lemmayv2}
	If we choose 
	\begin{equation}
		\beta\leq\frac{2}{\mu+L_{f_y}},\quad \eta_k\leq \frac{1}{L_{f_{y}}}.
	\end{equation}
	Then the sequence of $(x^k,y^k,v^k_i)$ generated by gMOBA satisfies
	\begin{align*}
		\|y^{k+1}-y^*(x^{k+1})\|^2 
		\leq &  
		\left(1- \frac{1}{2}{\mu }\beta\right) \|y^{k}- y^*(x^{k}) \|^2 
		+ \left(1+ \frac{2}{\mu \beta} \right) \frac{L_{f_{y}}^2}{\mu^2}\|x^{k+1}-x^{k}\|^2.
	\end{align*}
	and for each $i = 1, \ldots, m$,
	\begin{align*}
		\|v_i^{k+1}-v^*_i(x^{k+1})\|^2 
		\le 
		& \left( 1 + \frac{2}{\mu \eta_k} \right)\frac{L_{v}^2}{\mu^6}	\|x^{k+1}-x^{k}\|^2
		+ 2\eta_k \left(1 + \frac{1}{2}\mu \eta_k\right)  \frac{\left(L_{f_{yy}}C_{F_y} +L_{F_y}\mu\right)^2}{\mu^3}\|y^{k}-y^*(x^k)\|^2 \\
		& + \left(1- \frac{1}{2} \mu \eta_k \right) \|v_i^{k}- v_i^*(x^{k}) \|^2.
	\end{align*}
\end{lemma}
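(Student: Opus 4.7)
The plan is to combine the one-step descent estimates from Lemma \ref{lemmayv}, which control the errors relative to the \emph{current} upper-level iterate $x^k$, with the Lipschitz continuity of $y^*(\cdot)$ and $v_i^*(\cdot)$ from Lemma \ref{lemma*}, in order to pass to the errors at the \emph{new} iterate $x^{k+1}$. The bridge is Young's inequality: inserting $y^*(x^k)$ (resp.\ $v_i^*(x^k)$) as an intermediate point produces a contractive piece already handled by Lemma \ref{lemmayv} and a drift piece handled by the Lipschitz estimates, with the free Young parameter $\delta$ tuned so that the contraction rate halves from $1-\mu\beta$ to $1-\mu\beta/2$ (resp.\ from $1-\mu\eta_k$ to $1-\mu\eta_k/2$).

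Concretely, for the first bound I would write, for any $\delta>0$,
\[
\|y^{k+1}-y^*(x^{k+1})\|^2 \le (1+\delta)\|y^{k+1}-y^*(x^k)\|^2 + (1+1/\delta)\|y^*(x^k)-y^*(x^{k+1})\|^2,
\]
apply the first part of Lemma \ref{lemmayv} to the first summand and the bound $\|y^*(x^k)-y^*(x^{k+1})\|\le (L_{f_y}/\mu)\|x^{k+1}-x^k\|$ from Lemma \ref{lemma*}(i) to the second. Choosing $\delta=\mu\beta/2$ yields $(1+\delta)(1-\mu\beta) = 1-\mu\beta/2 - (\mu\beta)^2/2 \le 1-\mu\beta/2$, while $1+1/\delta = 1+2/(\mu\beta)$ gives exactly the claimed coefficient in front of $\|x^{k+1}-x^k\|^2$. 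The second bound follows from the same template with $v_i$ in place of $y$: Young's inequality with parameter $\delta=\mu\eta_k/2$ splits the error at $x^{k+1}$ into a piece at $x^k$, bounded by the second part of Lemma \ref{lemmayv}, and a drift piece $\|v_i^*(x^k)-v_i^*(x^{k+1})\|^2$, bounded by $(L_v/\mu^3)^2\|x^{k+1}-x^k\|^2$ from Lemma \ref{lemma*}(iii). The same algebra produces a contraction factor $\le 1-\mu\eta_k/2$ on $\|v_i^k-v_i^*(x^k)\|^2$; the multiplier $(1+\mu\eta_k/2)$ carries through to the $\|y^k-y^*(x^k)\|^2$ coefficient inherited from Lemma \ref{lemmayv}, and $(1+1/\delta) = 1+2/(\mu\eta_k)$ produces the stated $\|x^{k+1}-x^k\|^2$ coefficient.

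There is no substantive obstacle: the argument is essentially a two-line Young's inequality glued to the descent already proved in Lemma \ref{lemmayv}. The only point requiring care is the simultaneous calibration of $\delta$ so that the contractive factor shrinks to the target rate and the corresponding amplifier $1+1/\delta$ matches the prescribed constants $1+2/(\mu\beta)$ and $1+2/(\mu\eta_k)$; the choice $\delta=\mu\beta/2$ (resp.\ $\delta=\mu\eta_k/2$) is the unique value achieving both at once, which is ultimately why these particular constants appear in the statement.
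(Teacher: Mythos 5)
Your proposal is correct and follows essentially the same route as the paper's own proof: the paper likewise inserts $y^*(x^k)$ (resp.\ $v_i^*(x^k)$) via Young's inequality with parameter $\varepsilon=\tfrac{1}{2}\mu\beta$ (resp.\ $\varepsilon=\tfrac{1}{2}\mu\eta_k$), then invokes Lemma \ref{lemmayv} for the contractive piece and the Lipschitz bounds of Lemma \ref{lemma*} for the drift piece. Your algebraic verification that $(1+\delta)(1-\mu\beta)\le 1-\mu\beta/2$ and that the $(1+\mu\eta_k/2)$ factor carries onto the $\|y^k-y^*(x^k)\|^2$ term matches the stated constants exactly.
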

\begin{proof}
	We have 
	\[
	\begin{aligned}
		\quad ~ \|y^{k+1}-y^*(x^{k+1})\|^2 \le (1+ \varepsilon) \|y^{k+1}- y^*(x^{k}) \|^2 + (1+ \frac{1}{\varepsilon }) \|y^*(x^{k+1}) -y^*(x^k)\|^2.
	\end{aligned}
	\]
	Taking $\varepsilon = \frac{1}{2}\mu \beta$, then Lemma  \ref{lemmayv}  and $		\|y^*(x^k) - y^*(x^{k-1})\|^2 \le  \frac{L_{f_{y}}^2}{\mu^2}\|x^k-x^{k-1}\|^2$ from Lemma \ref{lemma*} yields 
	\[
	\begin{aligned}
		\quad ~ \|y^{k+1}-y^*(x^{k+1})\|^2 \leq   
		\left(1- \frac{1}{2}{\mu }\beta\right) \|y^{k}- y^*(x^{k}) \|^2 + \left(1+ \frac{2}{\mu \beta} \right) \frac{L_{f_{y}}^2}{\mu^2}\|x^{k+1}-x^{k}\|^2.
	\end{aligned}
	\]
	Similarly, for any $\varepsilon > 0$,
	\[
	\begin{aligned}
		\|v^{k+1}_i-v^*_{i}(x^{k+1})\|^2
		\leq &
		(1+ \varepsilon) \|v_i^{k+1}- v_i^*(x^{k}) \|^2 + (1+ \frac{1}{\varepsilon }) \|v_i^*(x^{k+1}) -v_i^*(x^k)\|^2.
	\end{aligned}
	\]
	Taking $\varepsilon = \frac{1}{2}\mu \eta_k$,  Lemma \ref{lemma*} and \ref{lemmayv} yield that
	\[
	\begin{aligned}
		\|v_i^{k+1}-v^*_i(x^{k+1})\|^2
		\leq & \left(1- \frac{1}{2} \mu \eta_k \right) \|v_i^{k}- v_i^*(x^{k}) \|^2 + 2\eta_k \left(1 + \frac{1}{2}\mu \eta_k\right) \frac{(L_{f_{yy}}C_{F_y} +L_{F_y}\mu)^2}{\mu^3}\|y^{k}-y^*(x^k)\|^2  \\
		&  + \left( 1 + \frac{2}{\mu \eta_k} \right)\frac{L_{v}^2}{\mu^6}	\|x^k-x^{k+1}\|^2.
	\end{aligned}
	\]
\end{proof}

We can show in the following result that the iterates generated by gMOBA admit a decreased property in terms of the intrinsic Lyapunov function values $V_i^k$ for each $i = 1, \ldots, m$.
\begin{proposition}\label{prop1}
	Suppose Assumptions \ref{Assump} holds, and $\beta\leq\frac{2}{\mu+L_{f_y}}$, $\eta_k\leq \frac{1}{L_{f_{y}}}$ for all $k$. Then the sequence of $x^k, y^k, v^k_i$ generated by gMOBA satisfies that for each $i = 1, \ldots, m$,
	\begin{equation}\label{eq10}
		V_i^{k+1} - V_i^{k} 
		\le 
		- \hat{\alpha}_k \|x^{k+1} - x^{k}\|^2
		- \hat{\beta}_k \|y^{k}- y^*(x^{k}) \|^2 - \hat{\gamma}_k  \|v_i^{k} - v_i^*(x^{k})\|^2,
	\end{equation}
	where $\hat{\alpha}_k, \hat{\beta}_k, \hat{\eta}_k$ are constants.
	Specially, there exist $c_\alpha, c_\eta > 0$ such that when $\alpha_k = \alpha \le \min\{c_ \alpha, \frac{\mu}{4L_{f_{y}}^2}\eta_k, \frac{\mu^7}{16L_v^2}\eta_k\}$ and $\eta_{k} = \eta \le c_\eta$, it holds
	\[
	\hat{\alpha}_k \ge \hat{\alpha}, \qquad \hat{\beta}_k \ge \hat{\beta}, \qquad \hat{\eta}_k \ge \hat{\eta},
	\]
	for some positive constants $\hat{\alpha}$, $\hat{\beta}$ and $\hat{\eta}$.
\end{proposition}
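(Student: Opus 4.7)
The plan is to sum the three descent inequalities already obtained in Lemmas \ref{lemmaFk}, \ref{lemmayv}, and \ref{lemmayv2}, collect terms by squared norm, and then choose the stepsize ratios so that the resulting coefficients on $\|x^{k+1}-x^{k}\|^2$, $\|y^{k}-y^*(x^{k})\|^2$, and $\|v_i^{k}-v_i^*(x^{k})\|^2$ are each bounded away from zero in the negative direction. Explicitly, I would start from the decomposition
\begin{align*}
V_i^{k+1}-V_i^{k} =\,& \big[\Phi_i(x^{k+1})-\Phi_i(x^{k})\big] + \big[\|y^{k+1}-y^*(x^{k+1})\|^2 - \|y^{k}-y^*(x^{k})\|^2\big] \\
&+ \big[\|v_i^{k+1}-v_i^*(x^{k+1})\|^2 - \|v_i^{k}-v_i^*(x^{k})\|^2\big],
\end{align*}
apply Lemma \ref{lemmaFk} to the first bracket and Lemma \ref{lemmayv2} to the remaining two, and sort the right-hand side.

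After regrouping, the coefficient of $\|x^{k+1}-x^{k}\|^2$ becomes $-\tfrac{1}{4\alpha_k} + \tfrac{L_\varphi}{2\mu^3} + \bigl(1+\tfrac{2}{\mu\beta}\bigr)\tfrac{L_{f_y}^2}{\mu^2} + \bigl(1+\tfrac{2}{\mu\eta_k}\bigr)\tfrac{L_v^2}{\mu^6}$; the coefficient of $\|y^{k}-y^*(x^{k})\|^2$ becomes $2\alpha_k\bigl(L_{F_x}+\tfrac{C_{F_y}L_{f_{xy}}}{\mu}\bigr)^2 - \tfrac{\mu\beta}{2} + 2\eta_k\bigl(1+\tfrac{\mu\eta_k}{2}\bigr)\tfrac{(L_{f_{yy}}C_{F_y}+L_{F_y}\mu)^2}{\mu^3}$; and the coefficient of $\|v_i^{k}-v_i^*(x^{k})\|^2$ becomes $2\alpha_k L_{f_y}^2 - \tfrac{\mu\eta_k}{2}$. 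This is the purely mechanical portion of the proof, nothing more than bookkeeping around the three lemmas.

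The remainder is a stepsize-balancing exercise. The $v$-coefficient is the cleanest: a scaling of the form $\alpha_k \le \tfrac{\mu}{4L_{f_y}^2}\eta_k$ (tightening the numerical constant slightly if strict negativity is needed) yields a contribution of order $-\mu\eta_k$. For the $x$-coefficient, the dominant positive term is the $\tfrac{2L_v^2}{\mu^7\eta_k}$ piece, so the matching scaling $\alpha_k \le \tfrac{\mu^7}{16L_v^2}\eta_k$ ensures that $\tfrac{1}{4\alpha_k}$ absorbs it, while the remaining $O(1)$ positive contributions are dominated by requiring $\alpha_k \le c_\alpha$ for an absolute constant $c_\alpha$ depending only on $\mu$, $\beta$, and the Lipschitz data. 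For the $y$-coefficient, I would first fix $\beta$ at the allowed maximum $\tfrac{2}{\mu+L_{f_y}}$ so that $-\tfrac{\mu\beta}{2}$ is a fixed negative constant, and then shrink $\alpha_k$ and $\eta_k$ (via $c_\alpha$ and $c_\eta$) to absorb the $O(\alpha_k)+O(\eta_k)$ positive terms.

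The main obstacle is the coupling between the $x$-coefficient and $\eta_k$: one of the positive terms on the $x$-coefficient scales as $1/\eta_k$, so one cannot first fix $\eta_k$ small and then separately fix $\alpha_k$; instead $\alpha_k$ must shrink proportionally to $\eta_k$. This is precisely the role of the two ratio conditions $\alpha_k \le \tfrac{\mu}{4L_{f_y}^2}\eta_k$ and $\alpha_k \le \tfrac{\mu^7}{16L_v^2}\eta_k$ in the statement. Once the linear scaling $\alpha \lesssim \eta$ is enforced with the correct proportionality constants, the three coefficients are simultaneously bounded below by explicit positive constants $\hat\alpha, \hat\beta, \hat\eta$ depending only on $\mu$, $\beta$, and the Lipschitz parameters, which is what the proposition asserts.
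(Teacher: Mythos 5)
Your proposal follows the paper's proof essentially verbatim: the same three-way decomposition of $V_i^{k+1}-V_i^{k}$, the same application of Lemma \ref{lemmaFk} to the objective bracket and Lemma \ref{lemmayv2} to the two error brackets, the same collected coefficients for $\hat{\alpha}_k$, $\hat{\beta}_k$, $\hat{\eta}_k$, and the same stepsize-balancing argument with $\alpha$ scaled linearly in $\eta$. The only (cosmetic) difference is the $v$-coefficient, where you drop the factor $(1-\eta_k\mu)$ that the paper keeps; as you yourself note, with the ratio $\alpha_k \le \frac{\mu}{4L_{f_y}^2}\eta_k$ taken at equality your version gives only $\hat{\eta}_k \ge 0$, so either a slightly tightened ratio constant or retaining the $(1-\eta_k\mu)$ factor (which yields $\hat{\eta}_k \ge \frac{1}{2}\mu^2\eta^2 > 0$) is needed to get a strictly positive lower bound.
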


\begin{proof}
	Here we take 
	\[
	\begin{aligned}
		\hat{\alpha}_k &:=  \frac{1}{4\alpha_k}-\frac{L_\varphi}{2\mu^3} - \left(1+ \frac{2}{\mu \beta} \right) \frac{L_{f_{y}}^2}{\mu^2} -  \left( 1 + \frac{2}{\mu \eta_k} \right)\frac{L_{v}^2}{\mu^6}  , \\
		\hat{\beta}_k &:=  \frac{1}{2}{\mu }\beta - 2\alpha_k\left( L_{F_{x}} + \frac{C_{F_{\y}} L_{f_{xy}}}{\mu}\right)^2 - 2\eta_k \left(1 + \frac{1}{2}\mu \eta_k\right) 	\frac{(L_{f_{yy}}C_{F_y} +L_{F_y}\mu)^2}{\mu^3}, \\
		\hat{\eta}_k &:= \frac{1}{2} \mu \eta_k  -  2\alpha_k L_{f_{y}}^2\left(1-\eta_k \mu\right).
	\end{aligned}
	\]
	By Lemma \ref{lemmaFk}, we have that for each $i = 1, \ldots, m$,
	\[
	\begin{aligned}
		V_i^{k+1} - V_i^{k} 
		= \, & \Phi_i(x^{k+1})  + \| y^{k+1}- y^*(x^{k+1})\|^2  + \| v_i^{k+1}- v^*_i(x^{k+1})\|^2 \\
		&- \Phi_{i}(x^k) - \| y^{k}- y^*(x^{k})\|^2 - \| v_i^{k}- v^*_i(x^{k})\|^2 \\
		\le \, &-\frac{1}{2}\left(\frac{1}{2\alpha_k}-\frac{L_\varphi}{\mu^3}\right)\|x^{k+1}-x^k\|^2
		\\
		& + \| y^{k+1}- y^*(x^{k+1})\|^2 + 2\alpha_k\left( L_{F_{x}} + \frac{C_{F_{\y}} L_{f_{xy}}}{\mu}\right)^2 \|y^{k}-y^*(x^k)\|^2  - \| y^{k}- y^*(x^{k})\|^2 \\
		& + \| v_i^{k+1}- v^*_i(x^{k+1})\|^2 + 2\alpha_k L_{f_{y}}^2 \|v^{k}-v^*_i(x^k)\|^2- \| v_i^{k}- v^*_i(x^{k})\|^2.
	\end{aligned}
	\]
	Then we can get the conclusion by applying Lemma \ref{lemmayv} and \ref{lemmayv2}.
\end{proof}

\subsubsection{Convergence Towards Pareto Stationarity}
Next, before we give the proof of the convergence of gMOBA towards the Pareto stationary point, we recall the following equivalent characterization for the Pareto stationary points of $\Phi$, which is established in Lemma 3.2 of  \cite{tanabe2019proximal}.

\begin{lemma}\label{lem7} \cite{tanabe2019proximal}
	Let $d_{\ell}(x) $ be defined as the solution of following optimization problem:
	\begin{equation}\label{define_dl}
		\underset{x+d \in \mathcal{X}}{\mathrm{min}} \,\max_{i = 1, \ldots, m}\left\{ \nabla \varphi_i(x)^T d + g_i(x+d) - g_i(x) \right\}  + \frac{\ell}{2}\|d\|^2,
	\end{equation}
	where $\ell$ is a positive constant, then $\bar{x}$ is a Pareto stationary point of $\Phi$ if and only if $d_{\ell}(\bar{x}) = 0$.
\end{lemma}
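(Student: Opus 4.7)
My plan is to view $d_\ell(\bar{x})$ as the unique minimizer of the strongly convex function
\[
\psi_{\bar{x}}(d) := \max_{i=1,\ldots,m}\{\nabla\varphi_i(\bar{x})^T d + g_i(\bar{x}+d) - g_i(\bar{x})\} + \frac{\ell}{2}\|d\|^2
\]
over feasible $d$ (those with $\bar{x}+d \in \mathcal{X}$). Strong convexity comes from the quadratic term together with the fact that the pointwise maximum of convex-plus-linear functions is convex, so $d_\ell(\bar{x})$ is well-defined. Since $d=0$ is feasible (because $\bar{x}\in\mathcal{X}$) and gives $\psi_{\bar{x}}(0)=0$, the equivalence I want reduces to: $d_\ell(\bar{x})=0$ if and only if $\psi_{\bar{x}}(d)\geq 0$ for every feasible $d$.

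For the backward direction (Pareto stationarity implies $d_\ell(\bar{x})=0$), the key ingredient is the standard convex-analytic inequality
\[
g_i'(\bar{x}; d) = \lim_{t\downarrow 0}\frac{g_i(\bar{x}+td) - g_i(\bar{x})}{t} \leq g_i(\bar{x}+d) - g_i(\bar{x}),
\]
which holds because $t \mapsto [g_i(\bar{x}+td) - g_i(\bar{x})]/t$ is nondecreasing in $t>0$ by convexity of $g_i$. Hence for any feasible $d$,
\[
\max_i \Phi_i'(\bar{x}; d) = \max_i\{\nabla\varphi_i(\bar{x})^T d + g_i'(\bar{x}; d)\} \leq \max_i\{\nabla\varphi_i(\bar{x})^T d + g_i(\bar{x}+d) - g_i(\bar{x})\}.
\]
Pareto stationarity makes the left-hand side nonnegative, so $\psi_{\bar{x}}(d) \geq \frac{\ell}{2}\|d\|^2 \geq 0$. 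Combined with $\psi_{\bar{x}}(0)=0$ and uniqueness of the minimizer, this forces $d_\ell(\bar{x})=0$.

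For the forward direction, assume $d_\ell(\bar{x})=0$, so $\psi_{\bar{x}}(d)\geq 0$ for all feasible $d$. Fix any direction $d' \in \mathbb{R}^{n_1}$; for $t>0$ sufficiently small, $\bar{x}+td' \in \mathcal{X}$ (using convexity of $\mathcal{X}$ and feasibility of $\bar x$, at least along feasible directions). Substituting $d = td'$, dividing by $t>0$, and letting $t\downarrow 0$ exchanges the limit with the finite maximum and yields
\[
\max_i\{\nabla\varphi_i(\bar{x})^T d' + g_i'(\bar{x}; d')\} \geq 0,
\]
which is precisely $\max_i \Phi_i'(\bar{x}; d')\geq 0$, the Pareto stationarity condition.

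The main subtlety I anticipate is reconciling the feasibility constraint $\bar{x}+d\in\mathcal{X}$ in the surrogate problem with the quantification over all $d\in\mathbb{R}^{n_1}$ in the definition of Pareto stationarity. I expect to handle this either by absorbing the indicator of $\mathcal{X}$ into each $g_i$ (preserving convexity and continuity on a neighborhood of $\bar{x}$ in $\mathcal{X}$) or by restricting attention to feasible directions, which is natural for a constrained problem. A secondary point is justifying the interchange of the limit $t\downarrow 0$ and the maximum in the forward direction; this is routine since $m$ is finite and each summand is continuous in $t$ at $t=0$.
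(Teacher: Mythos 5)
The paper does not actually prove this lemma---it is quoted verbatim from Lemma 3.2 of the cited reference \cite{tanabe2019proximal}---so there is no in-paper argument to compare against; your proof is a correct reconstruction of the standard argument used there, with the convexity inequality $g_i'(\bar{x};d)\le g_i(\bar{x}+d)-g_i(\bar{x})$ giving one implication and the scaling $d=td'$, $t\downarrow 0$ (with the finite max passing through the limit) giving the other. The subtlety you flag about reconciling the constraint $\bar{x}+d\in\mathcal{X}$ with the quantification over all $d\in\mathbb{R}^{n_1}$ in the stationarity definition is a genuine imprecision in the paper's setup, and absorbing the indicator of $\mathcal{X}$ into each $g_i$ (as the reference does by allowing extended-real-valued convex $g_i$) is the correct and standard resolution.
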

Combining the above equivalent characterization for the Pareto stationary with the sufficient decrease property established in Proposition \ref{prop1}, we are ready to establish the main convergence property of our proposed gMOBA, which is new even for the BLO case. 
\begin{theorem}\label{mainthm}
	Suppose Assumptions \ref{Assump}  holds. There exist $c_\alpha, c_\eta > 0$ such that when step sizes $\alpha_k$, $\beta$ and $\eta_k$ are chosen as constants that satisfy
	$\alpha_k = \alpha \le \min\{c_ \alpha, \frac{\mu}{4L_{f_{y}}^2}\eta_k, \frac{\mu^7}{16L_v^2}\eta_k\}$,  $\beta\leq\frac{2}{\mu+L_{f_y}}$, and $\eta_{k} = \eta \le c_\eta$, any limit point of sequence $\{x^k\}$ generated by gMOBA is Pareto stationary of problem \eqref{MOBLO}.
\end{theorem}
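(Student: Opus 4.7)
My plan is to combine the sufficient-descent property of the Lyapunov functions from Proposition \ref{prop1} with the characterization of Pareto stationarity through the proximal subproblem in Lemma \ref{lem7}. The first step is to exploit the uniform constants $\hat\alpha,\hat\beta,\hat\eta>0$ produced by the chosen step sizes to telescope the bound
\begin{equation*}
V_i^{k+1}-V_i^k \le -\hat\alpha\,\|x^{k+1}-x^k\|^2 - \hat\beta\,\|y^k-y^*(x^k)\|^2 - \hat\eta\,\|v_i^k-v_i^*(x^k)\|^2.
\end{equation*}
Since $F_i$ is bounded below (Assumption \ref{Assump}(c)) and $g_i$ is continuous (Assumption \ref{Assump}(d)), $\Phi_i(x^k)$ and hence $V_i^k$ is bounded below, so telescoping gives
\begin{equation*}
\sum_{k=0}^\infty\|x^{k+1}-x^k\|^2<\infty,\qquad \sum_{k=0}^\infty\|y^k-y^*(x^k)\|^2<\infty,\qquad \sum_{k=0}^\infty\|v_i^k-v_i^*(x^k)\|^2<\infty.
\end{equation*}
In particular, all three quantities tend to zero as $k\to\infty$.

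Next, let $x^*\in\mathcal{X}$ be any limit point of $\{x^k\}$ and take a subsequence $x^{k_j}\to x^*$. Using $\|y^{k_j}-y^*(x^{k_j})\|\to 0$, $\|v_i^{k_j}-v_i^*(x^{k_j})\|\to 0$, and the Lipschitz continuity of $y^*(\cdot)$ and $v_i^*(\cdot)$ from Lemma \ref{lemma*}, one gets $y^{k_j}\to y^*(x^*)$ and $v_i^{k_j}\to v_i^*(x^*)$ for each $i$. Continuity of $\nabla_x F_i$ and $\nabla^2_{xy} f$ then yields $d_{\varphi_i}^{k_j}\to \nabla\varphi_i(x^*)$, and $\|x^{k_j+1}-x^{k_j}\|\to 0$ gives $x^{k_j+1}\to x^*$. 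Because $\alpha_k\equiv\alpha$, the optimality of $x^{k+1}$ in \eqref{updatex} implies, for every $x\in\mathcal{X}$,
\begin{equation*}
\max_{1\le i\le m}\!\left\{\langle d_{\varphi_i}^k, x^{k+1}-x^k\rangle + g_i(x^{k+1}) - g_i(x^k)\right\} + \tfrac{1}{2\alpha}\|x^{k+1}-x^k\|^2 \le \max_{1\le i\le m}\!\left\{\langle d_{\varphi_i}^k, x-x^k\rangle + g_i(x) - g_i(x^k)\right\} + \tfrac{1}{2\alpha}\|x-x^k\|^2.
\end{equation*}
Passing to the limit along $k=k_j$, the left-hand side tends to $0$, while for any fixed $x\in\mathcal{X}$ the right-hand side tends to
\begin{equation*}
\max_{1\le i\le m}\!\left\{\langle\nabla\varphi_i(x^*), x-x^*\rangle + g_i(x) - g_i(x^*)\right\} + \tfrac{1}{2\alpha}\|x-x^*\|^2.
\end{equation*}
Setting $d=x-x^*$, this expression is nonnegative for every $x\in\mathcal{X}$, while $d=0$ achieves the value $0$; hence by strong convexity of problem \eqref{define_dl} with $\ell=1/\alpha$, its unique minimizer is $d_{1/\alpha}(x^*)=0$, and Lemma \ref{lem7} concludes that $x^*$ is Pareto stationary for $\Phi$.

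The main technical hurdle is securing the uniform positivity of $\hat\alpha,\hat\beta,\hat\eta$, which is precisely what the second half of Proposition \ref{prop1} delivers under the thresholds $c_\alpha,c_\eta$ and the coupling $\alpha\le\min\{\cdot\}$ built into the theorem's step-size conditions; with that uniform descent in hand, the remaining argument is a standard lower-semicontinuity/limit-passage calculation. A minor secondary point is passing to the limit inside the outer maximum, but this is harmless because the max is taken over finitely many continuous functions.
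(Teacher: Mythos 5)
Your proof is correct and follows the paper's overall skeleton (telescope the Lyapunov descent from Proposition \ref{prop1}, deduce that $\|x^{k+1}-x^k\|$, $\|y^k-y^*(x^k)\|$, $\|v_i^k-v_i^*(x^k)\|$ vanish, pass to a limit point, and invoke Lemma \ref{lem7}), but the final limit-passage step is handled by a genuinely different and more elementary device. The paper extracts a first-order optimality condition for the $x$-update via Danskin's theorem, obtaining multipliers $\lambda^k$ in the simplex and the inclusion $0 \in \sum_i \lambda_i^k (d_{\varphi_i}^k + \partial g_i(x^{k+1})) + \frac{1}{\alpha}(x^{k+1}-x^k) + \mathcal{N}_{\mathcal{X}}(x^{k+1})$, then passes to the limit using compactness of $\{\lambda^k\}$ and outer semicontinuity of $\partial g_i$ and $\mathcal{N}_{\mathcal{X}}$ before converting the limiting inclusion back into the statement that $0$ minimizes \eqref{define_dl}. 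You instead stay entirely on the primal side: you compare the optimal value of the proximal subproblem at $x^{k+1}$ against its value at an arbitrary fixed $x \in \mathcal{X}$, let $k = k_j \to \infty$, and read off directly that $d=0$ minimizes \eqref{define_dl} with $\ell = 1/\alpha$ at the limit point, whence $d_{1/\alpha}(x^*)=0$. Your route avoids Danskin's theorem, the multiplier extraction, and any appeal to outer semicontinuity of subdifferentials and normal cones, at the price of needing only the continuity of the finitely many functions under the max (which you correctly note is harmless); the paper's route produces as a by-product the explicit stationarity certificate $0 \in \sum_i \bar{\lambda}_i(\nabla\varphi_i(\bar x) + \partial g_i(\bar x)) + \mathcal{N}_{\mathcal{X}}(\bar x)$ with a limiting weight vector $\bar\lambda$, which is also what motivates the L2O-gMOBA construction later in the paper. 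One shared soft spot, present in the paper's own proof as well: the telescoping requires $V_i^k$ (equivalently $\Phi_i(x^k)$) to be bounded below, and Assumption \ref{Assump} only guarantees that $F_i$ is bounded below, not $g_i$ on a possibly unbounded $\mathcal{X}$; your justification inherits exactly the same implicit assumption, so this is not a defect relative to the paper.
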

\begin{proof}
	Telescoping inequalities \eqref{eq10} for $k = 0, \ldots, K$ yields that for each $i = 1, \ldots, m$,
	\begin{equation*}
			\sum_{k=0}^K \Big( \hat{\alpha} \|x^{k+1} - x^{k}\|^2 
			+ \hat{\beta} \|y^{k}- y^*(x^{k}) \|^2 
			+ \hat{\gamma}  \|v_i^{k} - v_i^*(x^{k})\|^2 \Big) \le V_i^0 - V_i^{K+1}.
	\end{equation*}
	By Assumption \ref{Assump}, $F_i(x^k,y^*(x^k))$ is bounded below for each $i$ and $k$, taking $K \rightarrow \infty$ in above inequalities implies that for each $i = 1, \ldots, m$. Thus
	\begin{equation*}
			\sum_{k=0}^\infty \Big( \hat{\alpha} \|x^{k+1} - x^{k}\|^2 
			+ \hat{\beta} \|y^{k}- y^*(x^{k}) \|^2
			+ \hat{\gamma}  \|v_i^{k} - v_i^*(x^{k})\|^2 \Big) < \infty,
	\end{equation*}
	which implies 
	$
	\|x^{k+1} - x^{k}\| \rightarrow 0$, $\|y^{k}- y^*(x^{k}) \| \rightarrow 0$, $ \|v_i^{k} - v_i^*(x^{k})\| \rightarrow 0$, $ i = 1, \ldots, m.$
	By the update of $x^{k+1} $, 
	according to the first-order optimality condition (see, e.g., Theorem 4.14 in  \cite{mordukhovich2013easy}) and Danskin's theorem (see, e.g., Theorem 2.55 in  \cite{mordukhovich2013easy}), there exists $\lambda^k \in \mathbb{R}^m$ such that $\lambda^k \ge 0$, $\sum_{i=1}^m \lambda^k_i = 1$, 
	\begin{equation}\label{eq:subopt}
		0 \in \sum_{i=1}^m \lambda^k_i \left(d_{\varphi_i}^k + \partial g_i(x^{k+1}) \right) + \frac{1}{\alpha}(x^{k+1} - x^k) + \mathcal{N}_{\mathcal{X}}(x^{k+1}),
	\end{equation}
	and $\lambda^k_i = 0$ for inactive $i$ for  $\max_{i=1,\dots,m}\{ \langle d_{\varphi_i}^k, x^{k+1} - x^{k}\rangle  + g_i(x^{k+1}) - g_i(x^k) \}
	$. As shown in the proof of Lemma \ref{lemmaFk}, we have for $i = 1, \ldots, m$,
	\begin{equation*}
		\begin{aligned}
			\| 	\nabla \varphi_{i}(x^k) - d_{\varphi_i}^k\| 
			\leq 
			\left( L_{F_{x}} + \frac{C_{F_{\y}} L_{f_{xy}}}{\mu}\right) \|y^{k}-y^*(x^k)\|
			+L_{f_{y}} \|v_i^{k}-v^*_i(x^k)\|.
		\end{aligned}
	\end{equation*}
	Then since $\|y^{k}- y^*(x^{k}) \| \rightarrow 0$, and $\|v_i^{k} - v_i^*(x^{k})\| \rightarrow 0$ for $i = 1, \ldots, m$, we have 
	\[
	\|\nabla \varphi_{i}(x^k) - d_{\varphi_i}^k\| \rightarrow 0, \quad i = 1, \ldots, m.
	\]
	Let $\bar{x}$ be any limit point of sequence $\{x^k\}$ and $\{x^l\}$ be the subsequence of $\{x^k\}$ such that $x^l \rightarrow \bar{x}$. Since $\|x^{k+1} - x^{k}\| \rightarrow 0$, we have $x^{l+1} \rightarrow \bar{x}$. By the continuity of $\nabla \varphi_{i}$, we have $d_{\varphi_i}^k \rightarrow \nabla \varphi_{i}(\bar{x}) $ for $i = 1, \ldots, m$. Since $\{\lambda^l\}$ are in a compact set, we can assume without loss of generality by taking a further subsequence that $\lambda^l \rightarrow \bar{\lambda}$ with $\bar{\lambda}$ satisfying $\sum_{i=1}^m \bar{\lambda}_i = 1$ and $\bar{\lambda} \ge 0$. Then by taking $k = l$ in \eqref{eq:subopt} and $l \rightarrow \infty$, because $g_i$ is locally Lipschitz continuous and $\partial g_i$, $\mathcal{N}_{\mathcal{X}}$ is outer semicontinuous, we have
	\[
	0 \in \sum_{i=1}^m \bar{\lambda}_i \left(\nabla \varphi_{i}(\bar{x}) + \partial g_i(\bar{x}) \right)  + \mathcal{N}_{\mathcal{X}}(\bar{x}).
	\]
	Then by Danskin's theorem and the first-order optimality sufficient condition for convex functions, we can obtain that $0$ is the minimizer of Problem \eqref{define_dl}
	and thus it follows from Lemma \ref{lem7} that $\bar{x}$ is Pareto stationary of $\Phi$.
\end{proof}

\section{L2O-gMOBA}
In many scenarios, the MOBLO problem \eqref{MOBLO} does not possess a unique Pareto optimal solution. To accurately represent the true Pareto front, it is necessary to identify as many Pareto optimal solutions as possible. This requires running our proposed gMOBA algorithm multiple times from different initial points, which can be computationally expensive. Motivated by the recent promising performance of L2O, we introduce a beneficial L2O neural network called L2O-gMOBA in this section. L2O-gMOBA will be implemented as the initialization phase of our gMOBA algorithm, aiming to accelerate its convergence.

Instead of employing a general-purpose neural network, we draw inspiration from the unrolling technique used in the design of model-based L2O. L2O-gMOBA is constructed based on the unrolling concept applied to our proposed gMOBA algorithm. In this study, we specifically focus on the case where $g(x) = 0$ and $\mathcal{X} = \mathbb{R}^{n}$. However, it is important to note that L2O-gMOBA can be extended to handle general cases involving arbitrary $g(x)$ and $\mathcal{X}$.

\subsection{L2O-gMOBA Neural Network}
We propose the construction of the L2O-gMOBA neural network by unrolling our gMOBA algorithm and truncating it to a fixed number of $K$ iterations. However, in Algorithm \ref{alg1}, the update rule for the variable $x$ is implicit. Specifically, we need to solve the optimization problem \eqref{updatex} at each iteration to update $x$. As a result, we cannot directly utilize the iterative scheme of gMOBA as a layer in the neural network.

We observe from \eqref{eq:subopt} that the updated variable $x^{k+1}$ generated by gMOBA at each iteration satisfies certain conditions. In fact, there exists $\lambda^k \in \mathbb{R}^m$ such that $\lambda^k \ge 0$, $\sum_{i=1}^m \lambda^k_i = 1$, and $x^{k+1} = x^k - \alpha\sum_{i=1}^m \lambda^k_i d_{\varphi_i}^k$. Leveraging this observation, we propose to treat $\lambda^k \in \mathbb{R}^m$ as the learnable parameters of the neural network, enabling us to construct the network using an explicit iterative scheme resembling gMOBA.
To further enhance the convergence speed of the neural network, we introduce an additional parameter $\gamma^k$ to control the overall step sizes of gMOBA at each iteration. This parameter becomes part of the neural network's learnable parameters.

In summary we define the parameter set $\Theta = \{ \{\lambda^k\}_{k=0}^K, \{\gamma^k\}_{k=0}^K \}$ as the parameters to be learned in the L2O-gMOBA neural network. The $k$-th layer of the L2O-gMOBA neural network can be constructed using the following explicit iterative scheme:
\begin{align*}
	y^{k+1} = & y^k - \gamma^k\beta \nabla_y f(x^k, y^k), \\
	v_i^{k+1} = & v_i^k - \gamma^k\eta (\nabla_{yy}^2 f(x^k, y^k) v_i^k - \sum_{i=1}^k \lambda_i^k \nabla_y F_i(x^k, y^k)) \quad \forall\, i = 1,\dots,m, \\
	x^{k+1} = & x^k - \gamma^k\alpha \sum_{i=1}^{m} \lambda_i^k (\nabla_x F_i(x^k, y^k) - \nabla_{xy}^2 f(x^k, y^k) v_i^k) ,
\end{align*}
with regarding the variables $y^k$, $v^k$ and $x^k$ 
as the units of the $k$-th hidden layer. Figure \ref{fig:neural} illustrates the structure of the $(k-1)$-th layer in the L2O-gMOBA neural network. As mentioned earlier, it is necessary for $\lambda^k$ to satisfy $\lambda^k \geq 0$ and $\sum_{i=1}^m \lambda_i^k = 1$. To ensure these conditions, we employ the softmax function $\sigma$ at each layer of the L2O-gMOBA neural network. The softmax function $\sigma$ guarantees that $\sum_{i=1}^{m} \sigma({\lambda}_i^k) = 1$ and $\sigma({\lambda}_i^k) \geq 0$ for any $\lambda^k \in \mathbb{R}^m$.

\begin{figure}
	\centering 
	\includegraphics[width=\textwidth]{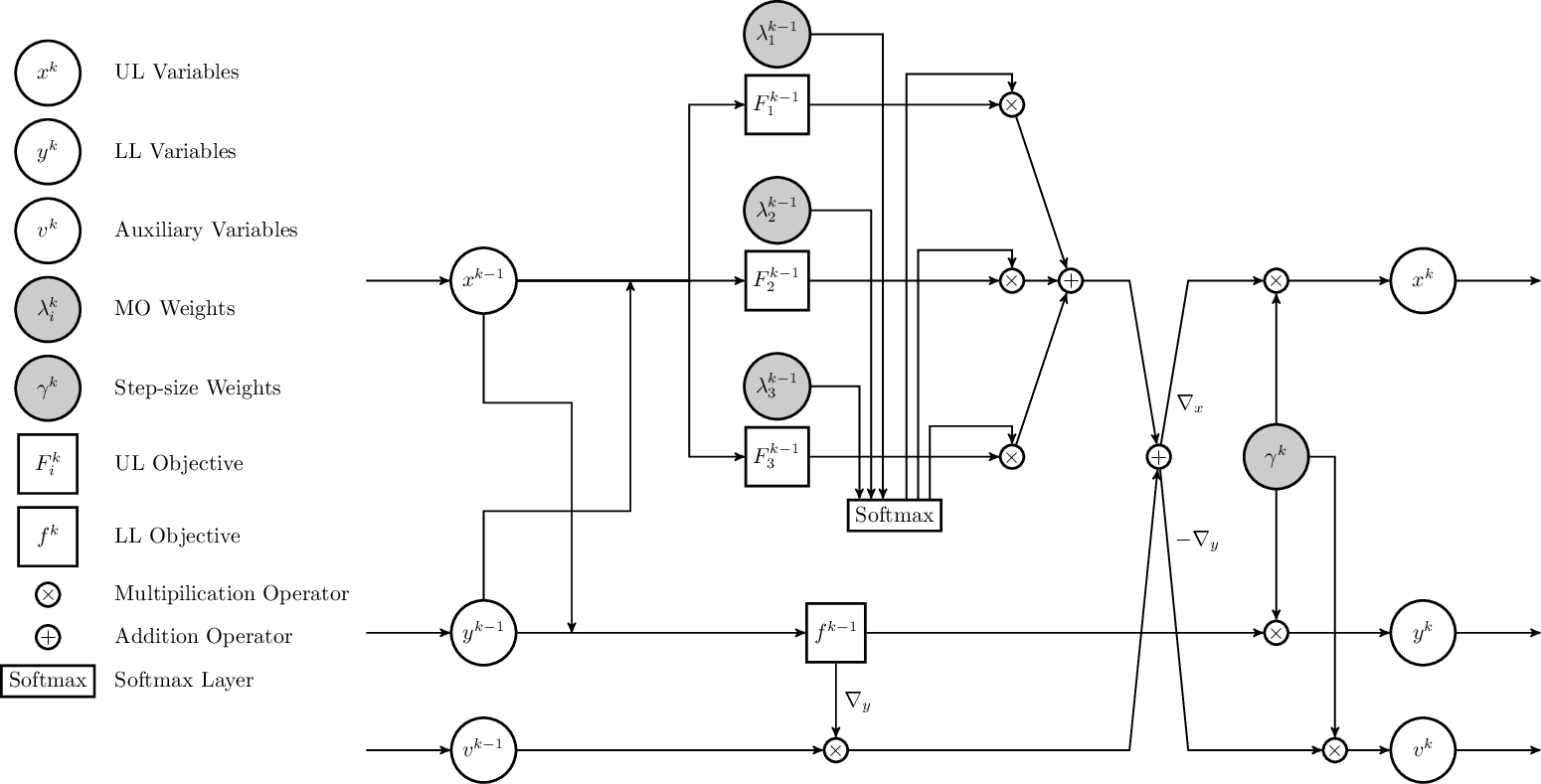}
	\caption{The architecture of the neural network for gMOBA}
	\label{fig:neural}
\end{figure}

\subsection{Training Procedure}
\label{sec:train}
The efficiency of L2O neural networks has been demonstrated in various applications, employing different training methods in the existing literature (e.g., see \cite{andrychowicz2016learning, yang2017admm}). Most of these studies train the neural network to achieve good performance across different optimization problems with identical structures and problem data from the same distribution.
However, in our work, the proposed L2O-gMOBA neural network is specifically designed to expedite the convergence of gMOBA for a fixed multi-objective optimization problem, considering different random initial points. Consequently, our L2O-gMOBA neural network is trained based on the given multi-objective optimization problem.

Following the training methodology presented in \cite{lin2020controllable}, we train the L2O-gMOBA neural network as follows. Initially, we set the step sizes of gMOBA and randomly initialize the network parameters $\Theta$, and train it with a gradient-based method like Adam \cite{kingma2014adam}.
For each training iteration, we sample a vector $p$ from a Dirichlet distribution $\mathrm{Dir}(\mathrm{1}_m)$ and randomly select an initial point $(x^0, y^0, v^0)$. Subsequently, we update the parameter $\Theta$ by minimizing the following loss function:
\[
\mathcal{L}_1(\Theta) := \mathbb{E}_{p\sim \mathrm{Dir}(\mathrm{1}_m)} \left[\sum_{i=1}^m p_i F_i(x^K(\Theta), y^K(\Theta))\right],
\]
where $(x^K(\Theta), y^K(\Theta), v^K(\Theta))$ represents the output of the L2O-gMOBA neural network at the final layer, given the initial point $(x^0, y^0, v^0)$ as input. The design of this loss function is inspired by \cite{lin2020controllable}. 
Additionally, we also evaluate the following alternative loss functions in our numerical experiments,
\begin{align*}
	\mathcal{L}_2(\Theta) := & \mathbb{E}_{p\sim \mathrm{Dir}(\mathrm{1}_m)} \left[\sum_{i=1}^m p_i F_i(x^K(\Theta), y^K(\Theta))\right] + f(x^K(\Theta), y^K(\Theta)), \\
	\mathcal{L}_3(\Theta) := & \max_{i=1, \dots, m}F_i(x^K(\Theta), y^K(\Theta)), \\
	\mathcal{L}_4(\Theta) := & \max_{i=1, \dots, m} F_i(x^K(\Theta), y^K(\Theta)) + f(x^K(\Theta), y^K(\Theta)).
\end{align*}
The loss function $\mathcal{L}_2$ incorporates lower-level objective function $f$ to enhance the lower-level optimality of the output. The loss function $\mathcal{L}_3$ and $\mathcal{L}_4$ are inspired by the Tchebycheff approach which is widely adopted in multi-objective optimization \cite{miettinen1999nonlinear}.

\section{Experiments}
In this section, we present experiments to illustrate the convergence property of our proposed gMOBA on the toy examples. All the experiments are coded in Python and implemented on 
a server with 64-bit Ubuntu 20.04.3 LTS, Intel(R) Xeon(R) Gold 5218R CPU @ 2.10GHz and 256.00 GB memory.
To illustrate the efficiency of our algorithms, we compare the proposed gMOBA and L2O-gMOBA with several algorithms including two gradient-based bilevel multi-objective algorithms MOML \cite{ye2021multi} and MORBiT \cite{gu2022min}, and three classical evolutionary single level multi-objective optimization algorithms NSGA-II \cite{deb2002fast}, NSGA-III \cite{jain2013evolutionary} and C-TAEA \cite{li2018two}. These evolutionary methods can provide fairly good results on small-size problems but fail when the dimension of the problem is large. 

Implementation of L2O-gMOBA is highly dependent on Pytorch \cite{pytorch}. Based on this, we define the L2O optimizer and train it through auto-grad technology, specifically, we train our L2O optimizer with Adam \cite{kingma2014adam} and use 0.01 as our learning rate. 
For each iteration in the training process, we sample the initial point $x^0$ from a $n$-dimension multivariate normal distribution $\mathcal{N}(0_n, I_{n\times n})$. 

\subsection{Numerical Problems}
To elaborate on how our proposed gMOBA converges to a Pareto stationary point, we consider the MOBLO problem \eqref{MOBLO} with
\[
F_i(x, y) =  \frac12 \begin{bmatrix}
	x\\ y
\end{bmatrix}^\mathup{T} A_i \begin{bmatrix}
	x\\ y
\end{bmatrix} + a_i^\mathup{T} \begin{bmatrix}
	x\\ y
\end{bmatrix},\quad g_i(x) = 0, 
\]
where $A_i \in \mathbb{R}^{(2n) \times (2n)}$, $a_i \in \mathbb{R}^{2n}$, $i = 1, \dots,m$, and
\[
f(x, y) = \frac12 y^\mathup{T} B y + x^\mathup{T} y + \frac{\mu}2 \|y\|^2,
\]
where $B \in \mathbb{R}^{n \times n}$ and $\mu>0$. In the following, each $A_i$ and $B$ are positive semidefinite. Recall that the {\it Pareto front} is the collection of vector objective values for all Pareto optimal solutions. Note the lower-level solution is given by 
\[
	y^*(x) = -(B^\mathup{T} B + \mu I)^{-1} x.
\] 
For this kind of toy example, the entire Pareto front can be easily obtained once all the problem data are determined by numerical calculation when $m$ is small, which helps us to evaluate the convergence property of the solvers. In all the following experiments, each component of $a_i$ is generated randomly from the uniform distribution on $[-1, 1]$, and each $A_i$ and $B$ are randomly generated such that their eigenvalues are between $0$ and $1$. 

To compare the performance of our proposed method, gMOBA, with classical evolutionary algorithms designed for single-level multi-objective optimization, we implemented these algorithms on a single-level multi-objective optimization reformulation where the lower-level problem is replaced by its optimality condition:
\begin{equation}\label{numerical2}
	\begin{aligned}
		\mathop{\min}\limits_{x \in \mathbb{R}^{n},\, y \in \mathbb{R}^{n}} 
		& (F_1(x,y), \ldots, F_m(x,y)), \\
		\mathrm{s.t.} \quad & 
		(B^\mathup{T} B y + \mu I) y + x = 0.
	\end{aligned}
\end{equation}
And the corresponding relaxed problem is 
\begin{equation}\label{numerical3}
	\begin{aligned}
		\mathop{\min}\limits_{x \in \mathbb{R}^{n},\, y \in \mathbb{R}^{n}} 
		& (F_1(x,y), \ldots, F_m(x,y)), \\
		\mathrm{s.t.} \quad & 
		(B^\mathup{T} B y + \mu I) y + x \le \varepsilon, \quad - (B^\mathup{T} B y + \mu I) y - x \le \varepsilon,
	\end{aligned}
\end{equation}
where $\varepsilon$ is a small positive number. Even for such a simplified problem, evolutionary algorithms struggle to find the Pareto front when the dimension of the problem is large, which is shown in the following experiments. 

\subsection{Metrics}
Since the MOBLO problem involves both multi-objective optimization and bilevel optimization, we need to evaluate a solution from both sides, thus, the following metrics are introduced in our experiments. 

To capture the performance from the multi-objective optimization perspective, we will use five metrics, purity \cite{bandyopadhyay2004multiobjective}, generational distance (GD) \cite{van1999multiobjective}, spread ($\Gamma$, $\Delta$) \cite{custodio2011direct} and spacing (SP) \cite{schott1995fault} to compare the performance of different solvers. The purity metric is computed by the number $|Y_N \cap Y_P|/|Y_N|$, where $Y_N$ is the Pareto front approximation obtained by the solver and $Y_P$ is a discrete representation of the real Pareto front. The GD metric is given by the formula $\sqrt{\sum_{y_1 \in Y_N} \min_{y_2 \in Y_P} \|y_1 - y_2\|^2 }/ |Y_N|$. Both the purity and GD metrics assess the ability of a solver to obtain points that are Pareto optimal.
The spread ($\Gamma$) metric is computed with $\max_{j \in \{ 1,\dots,m\} } (\max_{i \in \{0, \ldots, N\}}\delta_{i,j} )$  while the spread ($\Delta$) metric is computed with $\max_{j \in \{1,2,\dots, m \}} \left\{ 
(\delta_{0, j} + \delta_{N, j} + \sum_{i=1}^{N-1} |\delta_{i, j} - \bar{\delta}_j|) / (\delta_{0, j} + \delta_{N, j} + (N - 1) \bar{\delta}_j)
\right\}$, where $\delta_{i,j} = (F_{i+1,j}-F_{i,j})$ and the objective function values have been sorted by increasing order for each objective $F_j$, $\bar{\delta}_j$ for $j = 1,2,\dots, m$ is the mean of all distance $\delta_{i, j}$ for $i = 1,2,\dots, N-1$.
The SP metric is computed by $\sqrt{\sum_{y_1 \in Y_N}  (\bar{d} - d^1(y_1, Y_N \backslash\{y_1\}) )^2/(|Y_N|-1)}$, where $d^1(y_1, Y_N \backslash\{y_1\})$ denotes the $l_1$ distance of $y_1$ to the set $Y_N \backslash\{y_1\}$, i.e., the minimal $l_1$ distance of $y_1$ to the rest points in $Y_N$, and $\bar{d}$ is the mean of $d^1(y_1, Y_N \backslash\{y_1\})$ for $y_1 \in Y_N$. Both the spread and SP metrics capture the extent of the spread achieved in the Pareto front approximation obtained by the solver.

From the bilevel optimization perspective, we consider the optimality and the feasibility of the solution. Since the Pareto Front can be easily obtained, we can calculate the distance between terminated points to the Pareto Front to evaluate its optimality. We denote such a distance as $d_p$ and calculate it by $d_p = d((x^k, y^*(x^k)), \mathrm{PF}) / n$ where $\mathrm{PF}$ denotes the set of Pareto Front, the $n$ in the denominator serves for normalization. Due to the hierarchical structure of MOBLO, its feasibility means the optimality of lower-level problem, i.e., $\mathrm{Feasibility} = f(x^k, y^k) - f(x^k, y^*(x^k))$.

\subsection{Numerical Results}

In the first experiment, we choose $n = 100$, $m = 2$, and set $\mu = 0.1$. 
Every entry of initial point $x^0$ is chosen randomly from a standard normal distribution $\mathcal{N}(0, 1)$. 
For gMOBA, the parameters of stepsize are chosen as $\alpha_k = 0.0025, \beta = 1.0, \eta = 0.1$. For MOML in \cite{ye2021multi}, the parameters of stepsize are chosen as $\nu = 0.01, \mu = 1$, and we solve the lower level problem with 5 steps. For MORBiT in \cite{gu2022min}, the parameters of stepsize are chosen as $\alpha = 0.01, \beta = 1, \gamma = 0.003$.
All the solvers are terminated when the maximum change among multi-objective values is less than $10^{-4}$. We generate 100 initial points and run the algorithms from each of them. For the evolutionary methods, we set the maximum generation as $1000$ and the maximum number of evaluations as $100000$. Since they cannot be applied to the bilevel optimization problem directly, we apply them to the simplified problem \eqref{numerical2} where the lower-level problem is replaced by its optimality condition. However, when the dimension of the problem is large, the evolutionary methods fail to find feasible points. Therefore, we apply these methods to the relaxed problem \eqref{numerical3}, specifically, we set $\varepsilon = 10$ for C-TAEA, and $\varepsilon = 1$ for NSGA-II and NSGA-III, since smaller values of $\varepsilon$ lead to failure of the algorithms in our practice, i.e., the algorithms will find no feasible points with such small $\varepsilon$.  

\begin{figure}[ht]
	\centering
	\includegraphics[width = 0.5\textwidth]{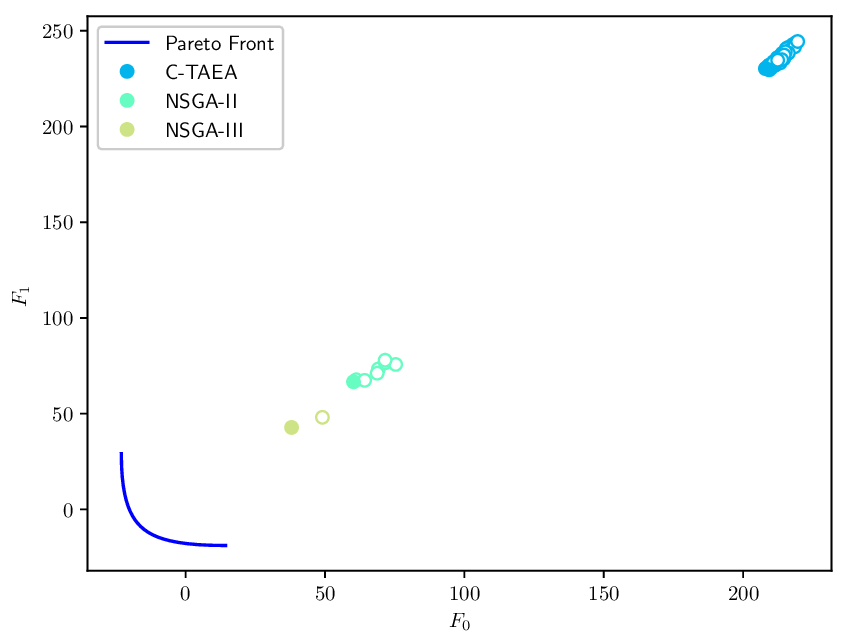} 
	\caption{Evolutionary methods on 2d toy example ($m = 2, n=100$) from 100 random initial points} \label{Fig:toy2d2}
\end{figure}

\begin{figure}[ht]
	\centering
	\includegraphics[width = .9\textwidth]{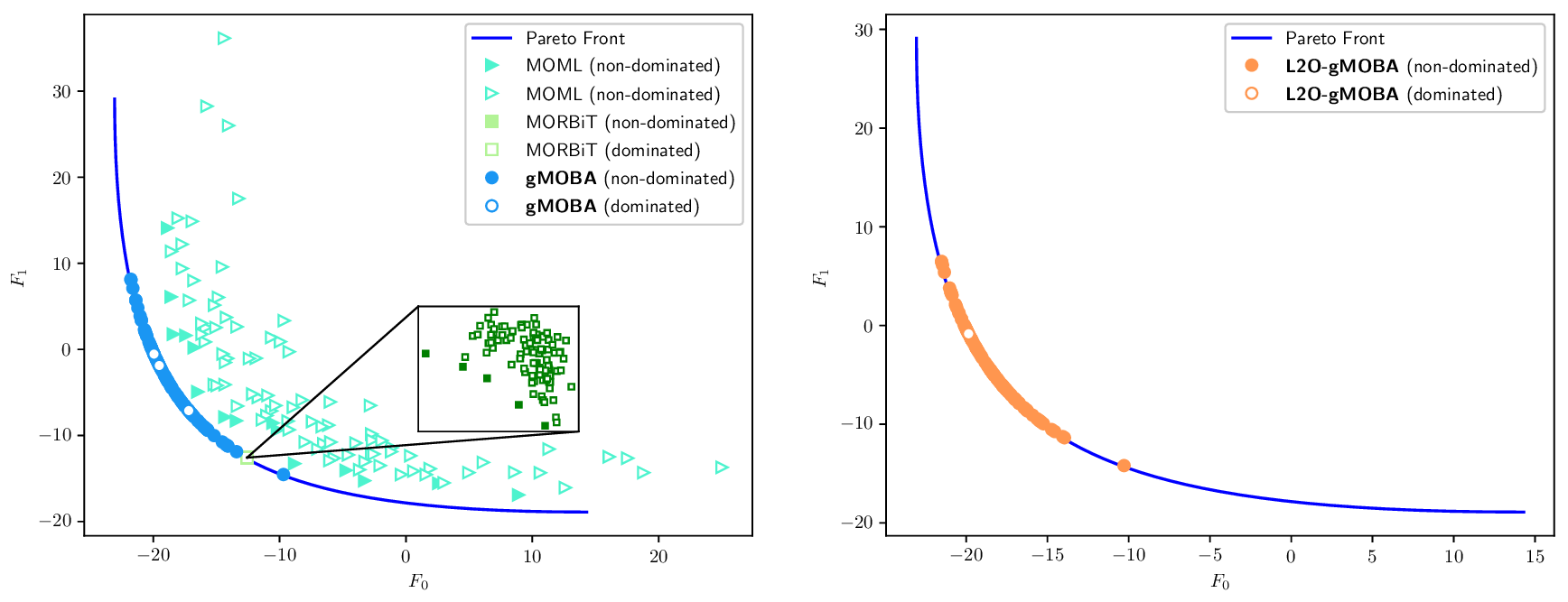} 
	\caption{Gradient-based methods on 2d toy example ($m = 2, n=100$) from 100 random initial points} \label{Fig:toy2d1}
\end{figure}

The evolutionary methods are shown in Figure \ref{Fig:toy2d2}, while the gradient-based results are presented in Figure \ref{Fig:toy2d1}. 
It can be seen that iterates generated by gradient-based methods can get close to the Pareto front well when the evolutionary methods fail to find the Pareto front under this setting. Therefore, we will only focus on the numerical performance of gradient-based methods in the following experiments. Besides, from the graph, we can see that gMOBA can converge to different parts on the Pareto front from different initial points, while MORBiT leads to the same point on the Pareto front from different initial points, and MOML fails to reach the Pareto front close enough.
However, it can be seen that MOML fails to converge to the Pareto Front from most of the initial points.

Next, we generate $5$ different problems with different randomly generated matrices $A_i$ and $B$ and vector $a_i$ and conduct experiments to compare the convergence properties of our proposed gMOBA and MOML under the same setting as above. 

The results of the experiments are shown in Tables \ref{table:toy1}--\ref{table:toy2}. All the results are reported with the mean value and the standard deviation of the 5 repetitions. All the methods are terminated either the change of multi-objective values is less than $10^{-4}$ or the distance from iterate to the Pareto front is less than $0.05$. 
As the updates of $(x^k, y^k, v^k_i)$ in gMOBA are parallelizable, we record the largest computation time for updating $x^k, y^k$ and $v^k_i$ as the running time of gMOBA at each iteration. Running times of MOML reported in the table are also the parallelized time.
Though gMOBA may be a little slower than MOML, it outperforms MOML in almost all the other metrics. 
The purity metric also shows that gMOBA can converge to the true Pareto front quickly in most cases, while MOML and MORBiT can only find part of the Pareto front. From the table, we 
observe that although L2O-gMOBA requires significant training time, it offers an attractive efficiency improvement when we need to find a large amount of Pareto points. This scenario often arises in multi-objective optimization, making this improvement highly beneficial. 

\begin{table}[th]\footnotesize
	\centering 
	\caption{MOBLO metrics for toy example ($m = 2$, $n = 100$)}
	\label{table:toy1}
	\begin{tabular}{c ccccc}
		\toprule 
		Method & Training Time (s) & Time (s) & purity (\%) & $d_p$ & Feasibility 
		\\
		\midrule
		MOML   & - & 2.2 $\pm$ 0.0 &  19.2 $\pm$ 4.4 &  3.86 $\pm$ 1.17 &  0.8 $\pm$ 0.3 \\
		MORBiT & - & 11.3 $\pm$ 4.8 &  36.4 $\pm$ 38.1 &  0.12 $\pm$ 0.02 &  8e-07 $\pm$ 1e-06\\
		\textbf{gMOBA}  & - & 0.9 $\pm$ 0.4 &  97.8 $\pm$ 2.3 &  0.10 $\pm$ 0.00 &  8e-03 $\pm$ 1e-02\\
		\textbf{L2O-gMOBA}  & 53.2$\pm$3.0 & 0.7 $\pm$ 0.4 &  98.6 $\pm$ 0.5 &  0.10 $\pm$ 0.00 &  8e-03 $\pm$ 1e-02\\
		\bottomrule
	\end{tabular}
\end{table}

\begin{table}[th]\footnotesize\tabcolsep 16pt
	\centering 
	\caption{MOBLO metrics for toy example ($m = 2$, $n = 100$) (cont'd)}
	\label{table:toy2}
		\begin{tabular}{c cccc}
			\toprule 
			Method  & GD & spread ($\Gamma$) & spread ($\Delta$) & SP 
			\\
			\midrule
		MOML & 1.30 $\pm$ 0.71 &  6.15 $\pm$ 1.23 &  0.85 $\pm$ 0.07 &  1.58 $\pm$ 0.62\\
		MORBiT & 0.91 $\pm$ 1.00 &  0.01 $\pm$ 0.01 &  0.84 $\pm$ 0.08 &  0.00 $\pm$ 0.00\\
		\textbf{gMOBA}  & 0.29 $\pm$ 0.30 &  2.72 $\pm$ 0.66 &  0.87 $\pm$ 0.03 &  0.27 $\pm$ 0.15\\
		\textbf{L2O-gMOBA} & 0.29 $\pm$ 0.31 &  1.49 $\pm$ 1.09 &  0.84 $\pm$ 0.02 &  0.17 $\pm$ 0.15\\
			\bottomrule
	\end{tabular}
\end{table}

In the second experiment, we choose $m = 3$ and keep all the other settings the same as in the first experiment. The results are collected in Table \ref{table:toy3}. From the table, we can see that this problem is more difficult than the previous one since the purities of all the methods are much lower than the previous one. However, gMOBA and L2O-gMOBA still outperform the other methods. 
\begin{table}[th]\footnotesize 
	\centering 
	\caption{MOBLO metrics for three objectives toy example ($m = 3, n = 100$)}
	\label{table:toy3}
	\begin{tabular}{c ccccc}
			\toprule 
			Method & Training Time (s) & Time (s) & purity (\%) & $d_p$ & Feasibility 
			\\
			\midrule
		MOML   & - & 49.1 $\pm$ 5.2 &  6.8 $\pm$ 2.3 &  23.55 $\pm$ 38.04 &  46.1 $\pm$ 5.2
		\\
		MORBiT & - & 31.2 $\pm$ 15.4 &  5.4 $\pm$ 5.9 &  0.14 $\pm$ 0.04 &  6e-07 $\pm$ 1e-07 \\
		\textbf{gMOBA}  & - & 6.4 $\pm$ 1.4 &  25.0 $\pm$ 13.7 &  0.14 $\pm$ 0.01 &  5e-04 $\pm$ 6e-04\\
		\textbf{L2O-gMOBA}  & 57.3 $\pm$ 0.5 & 5.3 $\pm$ 1.5 &  14.4 $\pm$ 6.3 &  0.13 $\pm$ 0.01 &  1e-03 $\pm$ 1e-03\\
			\bottomrule
	\end{tabular}
\end{table}

The third experiment is about a higher dimension experiment for both two and three objectives setting, i.e., we choose $m = 2$ or $3$, and $n = 1000$. The results are collected in Table \ref{table:toy4}. From the table, we can see that the proposed gMOBA and L2O-gMOBA give high-quality solutions in a short time even when the dimension is high, which implies that our proposed methods scale well.

\begin{table}[th]\footnotesize\tabcolsep 16pt
	\centering 
	\caption{MOBLO metrics for high dimensional toy example ($n = 1000$)}
	\label{table:toy4}
	\resizebox{\textwidth}{!}{
	\begin{tabular}{c ccccc}
			\toprule 
			Method & Training Time (s) & Time (s) & purity (\%) & $d_p$ & Feasibility 
			\\
			\midrule
			\multicolumn{6}{c}{$m = 2$}\\
			\midrule
		MOML   & - & 2.8 & 30 & 52.4 & 13.84  \\
		MORBiT & - & 24.4 & 2 & 1.39 & 1e-7  \\
		\textbf{gMOBA}  & - & 4.2 & 93 & 0.15 & 1e-3 \\
		\textbf{L2O-gMOBA}  & 247.6 & 3.8 & 95 & 0.14 & 9e-6  \\
			\midrule
			\multicolumn{6}{c}{$m = 3$}\\
			\midrule
		MOML   & - & 79.4 & 1 & 1365.2 & 867.6  \\
		MORBiT & - & 66.2 & 1 & 1.57 & 4e-6  \\
		\textbf{gMOBA}  & - & 27.8 & 9 & 1.23 & 1e-4 \\
		\textbf{L2O-gMOBA}  & 234.2 & 26.7 & 12 & 1.17 & 9e-6  \\
			\bottomrule
	\end{tabular}
	}
\end{table}

We also investigated the efficacy of L2O-gMOBA with different loss functions on small size problems where $m = 2$, $n = 5$ and find this method showcased significant improvements across multiple performance metrics while the choice of losses we consider aforementioned do not exhibit much differences. Specifically, in the training stage, L2O-gMOBA with 4 choices of losses takes around 8.8 seconds. In the evaluated stage, L2O-gMOBAs with $K =100$ consistently yielded higher purity percentages (ranging from 82.1\% to 83.4\%) with less time (ranging from 0.94s to 0.97s), compared to gMOBA's 76.1\% purity with 1.25s. Moreover, the $d_p$ values and the performance on feasibility for L2O-gMOBA remained competitive compared to gMOBA. All the methods get results of 0.05 on $d_p$ values with feasibility less than $10^{-5}$. These findings highlight L2O-gMOBA's potential to enhance the efficiency of gMOBA while showcasing resilience to variations in loss functions, thus we just report the results based on the first loss in our numerical experiments.

\section{Conclusion}

In this paper, we presented a simple yet highly efficient gradient-based algorithm (called gMOBA) for MOBLO that is guaranteed to converge to Pareto stationary solutions, and empirically converge to diverse Pareto optimal solutions, compared to existing methods. To accelerate the convergence of gMOBA, we introduce a beneficial L2O neural network (called L2O-gMOBA) implemented as the initialization phase of our gMOBA algorithm. Comparative results of numerical experiments are presented to illustrate the efficiency of L2O-gMOBA.

\section*{Declarations}

\noindent\textbf{Funding:} Yang's work is supported by the Major Program of the National Natural Science Foundation of China (Grant No. 11991020 and Grant No. 11991024). Yao's work is supported by National Science Foundation of China (Grant No. 12371305). Zhang's work is supported by National Science Foundation of China (Grant No. 12222106), Guangdong Basic and Applied Basic Research Foundation (Grant No. 2022B1515020082) and Shenzhen Science and Technology Program (Grant No. RCYX20200714114700072).

\vspace{5pt}

\noindent\textbf{Conflict of Interest:} The authors declare that they have no conflict of interest.

\end{document}